\newtheorem{letterthm}{Theorem}
\newtheorem{lettercor}[letterthm]{Corollary}
\newtheorem{thm}{Theorem}[section]
\newtheorem{lem}[thm]{Lemma}
\newtheorem{prop}[thm]{Proposition}
\theoremstyle{definition}
\newtheorem{rem}[thm]{Remark}
\newtheorem{nota}[thm]{Notation}
\newtheorem{df}[thm]{Definition}
\newtheorem*{claim}{Claim}
\newcommand{\R}{\mathbf{R}}
\newcommand{\C}{\mathbf{C}}
\newcommand{\N}{\mathbf{N}}
\newcommand{\Ad}{\operatorname{Ad}}
\newcommand{\id}{\text{\rm id}}
\newcommand{\rL}{\mathord{\text{\rm L}}}
\newcommand{\rE}{\mathord{\text{\rm E}}}
\newcommand{\Sp}{\mathord{\text{\rm Sp}}}
\newcommand{\Ball}{\mathord{\text{\rm Ball}}}
\newcommand{\dpr}{^{\prime\prime}}
\begin{document}

\title[Free independence in ultraproduct von Neumann algebras and applications]{Free independence in ultraproduct \\ 
von Neumann algebras and applications}

\begin{abstract}
We prove an analogue of Popa's free independence result for subalgebras of ultraproduct ${\rm II_1}$ factors \cite{Po95b} in the framework of ultraproduct von Neumann algebras $(M^\omega, \varphi^\omega)$ where $(M, \varphi)$ is a $\sigma$-finite von Neumann algebra endowed with a faithful normal state satisfying $(M^{\varphi})' \cap M = \C 1$. More precisely, we show that whenever $P_1, P_2 \subset M^\omega$ are von Neumann subalgebras with separable predual that are globally invariant under the modular automorphism group $(\sigma_t^{\varphi^\omega})$, there exists a unitary $v \in \mathcal U((M^\omega)^{\varphi^\omega})$ such that $P_1$ and $v P_2 v^*$ are $\ast$-free inside $M^\omega$ with respect to the ultraproduct state $\varphi^\omega$. Combining our main result with the recent work of Ando-Haagerup-Winsl\o w \cite{AHW13}, we obtain a new and direct proof, without relying on Connes-Tomita-Takesaki modular theory, that Kirchberg's quotient weak expectation property (QWEP) for von Neumann algebras is stable under free product. Finally, we obtain a new class of inclusions of von Neumann algebras with the relative Dixmier property. 
\end{abstract}

\author{Cyril Houdayer}
\address{CNRS - Universit\'e Paris-Est - Marne-la-Vall\'ee \\
    LAMA UMR 8050 \\ 77454 Marne-la-Vall\'ee cedex~2 
\\ France}
\email{cyril.houdayer@u-pem.fr}
\thanks{CH is supported by ANR grant NEUMANN and JSPS Invitation Fellowship Program for Research in Japan FY 2014.}

\author{Yusuke Isono}
\address{RIMS, Kyoto University, 606-8502 Kyoto, Japan}
\email{isono@kurims.kyoto-u.ac.jp}
\thanks{YI is supported by JSPS Research Fellowship}

\subjclass[2010]{46L10; 46L54}

\keywords{Free product von Neumann algebras; QWEP von Neumann algebras; Relative Dixmier property; Ultraproduct von Neumann algebras}

\maketitle

\section{Introduction and statement of the main results}

In his seminal article \cite{Po95b}, Popa obtained a free independence result for subalgebras of ultraproduct ${\rm II_1}$ factors. Among other results, he showed that whenever $P_1, P_2 \subset M^\omega$ are von Neumann subalgebras with separable predual of an ultraproduct ${\rm II_1}$ factor $(M^\omega, \tau^\omega) = (M_n, \tau_n)^\omega$, there exists a unitary $v \in \mathcal U(M^\omega)$ such that $P_1$ and $v P_2 v^*$ are $\ast$-free inside $M^\omega$ with respect to the ultraproduct trace $\tau^\omega$. This result had important consequences regarding Connes's approximate embedding problem \cite{Co75, Ki92, Oz03}. Indeed, it implied that the class of tracial von Neumann algebras with separable predual that are embeddable into $R^\omega$ in a trace-preserving way is stable under free product. Very recently, Popa further investigated the independence properties of subalgebras of ultraproduct ${\rm II_1}$ factors. Firstly, he obtained in \cite{Po13a} the first class of maximal abelian subalgebras of ultraproduct ${\rm II_1}$ factors satisfying the Kadison-Singer conjecture and secondly he generalized in \cite{Po13b} his earlier results from \cite{Po95b} to the case of tracial amalgamated free product von Neumann algebras.

In this paper, we prove an analogue of Popa's free independence result for subalgebras of ultraproduct ${\rm II_1}$ factors \cite{Po95b} in the framework of ultraproduct von Neumann algebras $(M^\omega, \varphi^\omega)$ where $(M, \varphi)$ is a $\sigma$-finite von Neumann algebra endowed with a faithful normal state satisfying $(M^{\varphi})' \cap M = \C 1$. We refer to \cite{AH12, Oc85} and Section \ref{preliminaries} for the construction of the ultraproduct von Neumann algebra $(M^\omega, \varphi^\omega)$.

\begin{letterthm}\label{thmA}
Let $(M, \varphi)$ be any non-type {\rm I} $\sigma$-finite factor endowed with a faithful normal state and $Q \subset M^\varphi$ any von Neumann subalgebra satisfying $Q' \cap M = \C 1$. Let $\omega \in \beta(\N) \setminus \N$ be any non-principal ultrafilter. For all $i \in \{1, 2\}$, let $P_i \subset M^\omega$ be a von Neumann subalgebra with separable predual that is globally invariant under the modular automorphism group $(\sigma_t^{\varphi^\omega})$.

Then there exists a unitary $v \in \mathcal U(Q^\omega)$ such that $P_1$ and $v P_2 v^*$ are $\ast$-free inside $M^\omega$ with respect to the ultraproduct state $\varphi^\omega$.
\end{letterthm}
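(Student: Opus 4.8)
The plan is to prove Theorem A by constructing the unitary $v \in \mathcal U(Q^\omega)$ via a reduction to a countable "generic position" argument, in the spirit of Popa's original approach in \cite{Po95b} but adapted to the type III / state-preserving setting. Since both $P_1$ and $P_2$ have separable predual, each is generated by a countable $\sigma^{\varphi^\omega}$-invariant family of elements; lifting these to representing sequences in $\ell^\infty(M)$, the whole problem becomes one of finding, for suitable finite subsets and tolerances $\varepsilon > 0$, a unitary $v \in \mathcal U(Q)$ that approximately places a finite piece of $P_2$ in free position relative to a finite piece of $P_1$, with errors controlled in the $\|\cdot\|_{\varphi}^\sharp$-norm. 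The ultrafilter limit then assembles these approximate solutions into an exact unitary in $\mathcal U(Q^\omega)$ realizing exact $\ast$-freeness.

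First I would isolate the right notion of "approximate freeness" at the finite level: given alternating products of centered elements drawn from the two subalgebras, one wants $\varphi(w) \approx 0$ for all reduced words $w = a_1 b_1 a_2 b_2 \cdots$ with $a_i$ coming from one algebra and $v b_i v^*$ from the conjugate of the other, each $a_i, b_i$ having $\varphi$-expectation zero. The key technical input is that $Q \subset M^\varphi$ with $Q' \cap M = \C 1$, which should let me invoke a Hilbert-space / weak mixing type argument: because $Q$ has trivial relative commutant in $M$, the conjugation action $\mathrm{Ad}(u)$ for $u \in \mathcal U(Q)$ acts "ergodically enough" that one can push a given finite-dimensional subspace of $L^2(M,\varphi)$ into near-orthogonality with respect to a prescribed finite configuration. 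Concretely, I would average over $\mathcal U(Q)$ (or exploit that $\C 1$ is the only $Q$-central vector) to show that for any finite family of vectors the orthogonal displacement needed for approximate freeness can be achieved by some $u \in \mathcal U(Q)$, up to arbitrarily small error. This is where the factoriality of $M$ and the hypothesis $Q' \cap M = \C 1$ are essential, as they guarantee the absence of obstructing invariant vectors.

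The second main step is to handle the modular structure: since the $P_i$ are globally invariant under $(\sigma_t^{\varphi^\omega})$ and $Q \subset M^\varphi$ so that every $u \in \mathcal U(Q)$ commutes with $\sigma_t^\varphi$, conjugation by $v \in \mathcal U(Q^\omega)$ preserves the modular data and keeps $vP_2v^*$ globally $\sigma^{\varphi^\omega}$-invariant. This compatibility is what makes the state-preserving freeness meaningful and lets the GNS/Hilbert-space estimates proceed with $\varphi$ (rather than a trace) as the reference functional; one must be careful to use the symmetrized norm $\|x\|_\varphi^\sharp = (\tfrac12(\varphi(x^*x) + \varphi(xx^*)))^{1/2}$ throughout so that the relevant subspaces are preserved by the modular operator. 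I would then carry out a diagonal / exhaustion argument over an increasing sequence of finite configurations and shrinking $\varepsilon$, extracting at each finite stage a unitary $u_n \in \mathcal U(Q)$, and finally take $v = (u_n)^\omega \in \mathcal U(Q^\omega)$; the freeness relations, holding approximately with errors tending to $0$ along $n$, become exact identities in $(M^\omega, \varphi^\omega)$.

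The hardest part, I expect, is the core approximate-orthogonality estimate in the third paragraph's first step: proving that the trivial relative commutant condition $Q' \cap M = \C1$ genuinely forces enough mixing in $L^2(M,\varphi)$ to decorrelate an arbitrary finite configuration coming from $P_1$ against a conjugated configuration from $P_2$, uniformly enough to survive the ultraproduct limit. In the tracial case this is Popa's incremental patching argument; in the present setting the absence of a trace means inner products and conditional expectations interact with the modular automorphism group, so I anticipate needing a careful spectral decomposition of $L^2(M,\varphi)$ under $Q$ (showing the $Q$-fixed vectors reduce to $\C\xi_\varphi$) together with an averaging argument that produces the required unitary while respecting the $\sigma^\varphi$-grading. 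Once this mixing lemma is in place, the assembly into an exact free pair via the ultraproduct should be comparatively routine.
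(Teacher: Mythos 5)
Your scaffolding (countable reduction using the separable preduals and modular invariance, finite-stage approximation in $(M,\varphi)$, assembly along $\omega$) matches the shape of the paper's argument, but the engine you propose for the finite stage does not work, and it is precisely the step you yourself flag as hardest. You want to derive approximate freeness from an averaging / weak-mixing argument: since $Q' \cap M = \C 1$, conjugation by unitaries of $Q$ should ``decorrelate'' any finite configuration. Such arguments control expressions in which the unitary occurs \emph{once} (e.g.\ making $|\varphi(yuz)|$ small, or Dixmier-type averages $\frac{1}{n}\sum_\ell u^\ell x u^{-\ell} \approx \varphi(x)1$ --- note that the latter is what the paper's Theorem \ref{thmC} deduces \emph{from} Theorem \ref{thmA}, not the other way around). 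Freeness, however, requires $\varphi$ to vanish on alternating words $x_0 v x_1 v^* x_2 v \cdots$ in which the \emph{same} unitary $v$ and its adjoint occur arbitrarily many times, and no averaging or absence-of-invariant-vectors statement kills these multi-occurrence correlations. The paper's mechanism is Popa's incremental patching method: Zorn's lemma over partial isometries $v \in fQf$ with $v^*v = vv^*$ ordered by extension, and, when the maximal $v$ is too small, a patch $w \in \mathcal U(qQq)$ in the complementary corner; the cross-terms mixing $v$'s and $w$'s are controlled by induction on word length together with Popa's local quantization principle (Proposition \ref{local-quantization-principle}), which produces a single projection $q \in Q$ with $\|q z q - \varphi_p(pzp)q\|_\varphi < \alpha \|q\|_\varphi$ for the finitely many middle letters $z$; these letters are themselves shorter words whose expectations are already controlled by the inductive hypothesis. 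This is also exactly where diffuseness of $Q$ enters ($Q$ is automatically a ${\rm II_1}$ subfactor because $M$ is non-type ${\rm I}$ and $Q' \cap M = \C 1$): both the local quantization principle and the choice of $w$ in the diffuse corner $qQq$ making the length-one cross terms small require it. Your proposal never identifies this, or any equivalent, mechanism.

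There is a second, structural, gap: you plan to produce genuine unitaries $u_n \in \mathcal U(Q)$ achieving approximate freeness at the finite level and set $v = (u_n)^\omega$. The patching argument in $(M,\varphi)$ only yields \emph{partial isometries} of definite size, $\varphi(v^*v) > \varphi(f)/(12\, C(a)^2 + 1)$ (Lemma \ref{technical-lemma1}); there is no evident finite-level unitary statement. In the paper, unitarity is reached only inside the ultraproduct, by a second maximality argument (Lemma \ref{technical-lemma3}): if a maximal partial isometry in $Q^\omega$ is not unitary, one patches in the corner $1 - v^*v$ using the exact ultraproduct version of the finite-level lemma (Lemma \ref{technical-lemma2}); a final diagonal argument (Lemma \ref{technical-lemma4}), which requires the constant sequence $M \subset M^\omega$, then removes the restrictions to words of bounded length with letters of bounded spectral support. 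Relatedly, your modular bookkeeping via $\|\cdot\|_\varphi^\sharp$ is not a substitute for the paper's: the word estimates need letters lying in spectral subspaces $M(\sigma^\varphi, [-a,a])$, the analyticity bound $\|yb\|_\varphi \leq \|\sigma_{-{\rm i}/2}^\varphi(b^*)\|_\infty \|y\|_\varphi$, and the lifting of bounded-spectrum elements of $M^\omega$ to bounded-spectrum sequences (Proposition \ref{analytic-sequence}); without these, the estimates cannot even be set up in the absence of a trace.
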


The proof of Theorem \ref{thmA} uses Popa's {\em incremental patching method} from \cite{Po95b, Po13a, Po13b} with some notable differences in its technical implementation. It can be divided into two main steps. We refer to Section \ref{incremental} for more details.

The {\bf first step of the proof} (see Lemma \ref{technical-lemma1}) consists in proving an ``approximate version" of Theorem \ref{thmA} for the von Neumann algebra $(M, \varphi)$. This result (as well as its proof) is an adaptation of \cite[Lemma 1.2]{Po95b} and is the key result to prove Theorem \ref{thmA}. Like in \cite[Lemma 1.2]{Po95b}, the proof of Lemma \ref{technical-lemma1} uses a maximality argument and relies in a crucial way on Popa's {\em local quantization principle} (see  \cite{Po92, Po95a} and Proposition \ref{local-quantization-principle}). Due to the absence of a trace on the von Neumann algebra $(M, \varphi)$, we work with elements in $M$ that are analytic with respect to the modular automorphism group $(\sigma_t^\varphi)$ and use techniques from \cite[Section~4]{AH12}.

The {\bf second step of the proof} (see Lemmas \ref{technical-lemma2}, \ref{technical-lemma3}, \ref{technical-lemma4}) consists in using the first step of the proof and the modularity assumption on the von Neumann subalgebras $P_1, P_2 \subset M^\omega$ in order to build {\em via} several diagonalization and patching procedures  a unitary element $v \in \mathcal U((M^\varphi)^\omega)$ that satisfies the conclusion of Theorem \ref{thmA}. We observe that unlike \cite{Po95b}, we need to assume that $M_n = M$ is a constant sequence of von Neumann algebras in order to have $M \subset M^\omega$.

Let $M$ and $\mathcal N$ be any von Neumann algebras. We will say that $M$ {\em embeds with expectation into} $\mathcal N$ if there exist a normal $\ast$-embedding $\pi : M \to \mathcal N$ and a faithful normal conditional expectation $\rE_{\pi(M)} : \mathcal N \to \pi(M)$. Using Theorem \ref{thmA} and \cite[Theorem 4.20]{AH12}, we show that the class of von Neumann algebras with separable predual that embed with expectation into a $\sigma$-finite factor of type ${\rm III_1}$ endowed with a faithful normal state that has a large centralizer is stable under free product.

\begin{lettercor}\label{corB}
Let $(M, \theta)$ be any $\sigma$-finite factor of type ${\rm III_1}$ endowed with a faithful normal state such that $(M^\theta)' \cap M = \C 1$.  For all $i \in \{1, 2\}$, let $M_i$ be any von Neumann algebra with separable predual that embeds with expectation into $M^\omega$. For all $i \in \{1, 2\}$, let $\varphi_i \in (M_i)_\ast$ be any faithful normal state. Then the free product von Neumann algebra $(M_1, \varphi_1) \ast (M_2, \varphi_2)$ embeds with expectation into~$M^\omega$.
\end{lettercor}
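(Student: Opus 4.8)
The plan is to realize the reduced free product concretely inside $M^\omega$ as the von Neumann algebra generated by two modular-invariant copies of $M_1$ and $M_2$ that Theorem \ref{thmA} puts in free position, and then to produce the conditional expectation via Takesaki's theorem. Throughout, the relevant state on $M^\omega$ is $\theta^\omega$, with modular group $(\sigma_t^{\theta^\omega})$; I set $Q := M^\theta$, which satisfies $Q' \cap M = \C 1$ by hypothesis, and I note that $M$, being of type ${\rm III_1}$, is in particular non-type {\rm I}, so that $(M, \theta)$ and $Q$ are admissible inputs for Theorem \ref{thmA}.

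First I would upgrade the hypothesis that each $M_i$ embeds with expectation into $M^\omega$ into a \emph{state-preserving, modular-covariant} embedding. Concretely, applying \cite[Theorem 4.20]{AH12} to the faithful normal state $\varphi_i$ on $M_i$, I would produce a normal $\ast$-embedding $\pi_i : M_i \to M^\omega$ with $\theta^\omega \circ \pi_i = \varphi_i$ and such that $P_i := \pi_i(M_i)$ is globally invariant under $(\sigma_t^{\theta^\omega})$. Since $\pi_i$ is normal and $M_i$ has separable predual, $P_i$ has separable predual; and by Takesaki's theorem the modular invariance of $P_i$ yields a faithful normal $\theta^\omega$-preserving conditional expectation $\rE_{P_i} : M^\omega \to P_i$.

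With $P_1, P_2 \subset M^\omega$ now modular-invariant subalgebras with separable predual, I would apply Theorem \ref{thmA} to $(M,\theta)$, $Q = M^\theta$, and $P_1, P_2$, obtaining a unitary $v \in \mathcal U(Q^\omega) \subset \mathcal U((M^\omega)^{\theta^\omega})$ such that $P_1$ and $v P_2 v^*$ are $\ast$-free in $(M^\omega, \theta^\omega)$. Because $v$ lies in the centralizer we have $\sigma_t^{\theta^\omega}(v) = v$, so $v P_2 v^*$ remains globally invariant under $(\sigma_t^{\theta^\omega})$, the map $\Ad(v) \circ \pi_2 : M_2 \to M^\omega$ is still state-preserving (namely $\theta^\omega \circ \Ad(v) \circ \pi_2 = \varphi_2$), and $v\, \rE_{P_2}(v^* \cdot v)\, v^*$ is a $\theta^\omega$-preserving expectation onto $v P_2 v^*$. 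The two state-preserving embeddings $\pi_1$ and $\Ad(v) \circ \pi_2$, having free and state-preserving ranges, then realize, by the uniqueness (universal property) of the reduced free product, a state-preserving $\ast$-isomorphism
\[
(M_1, \varphi_1) \ast (M_2, \varphi_2) \;\xrightarrow{\ \sim\ }\; \mathcal P := (P_1 \cup v P_2 v^*)'' \subset M^\omega,
\]
the faithfulness being a consequence of the faithfulness of $\varphi_1, \varphi_2$ and of the free product state.

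Finally, since $P_1$ and $v P_2 v^*$ are both globally invariant under $(\sigma_t^{\theta^\omega})$, so is the generated algebra $\mathcal P$; Takesaki's theorem therefore provides a faithful normal $\theta^\omega$-preserving conditional expectation $\rE_{\mathcal P} : M^\omega \to \mathcal P$, whence $(M_1, \varphi_1) \ast (M_2, \varphi_2) \cong \mathcal P$ embeds with expectation into $M^\omega$, as required. The main obstacle is the first step: converting the mere existence of an expectation onto a copy of $M_i$ (whose induced state need not be $\varphi_i$, and whose range need not be modular-invariant) into a genuinely state-preserving, modular-covariant embedding is exactly where \cite[Theorem 4.20]{AH12} is essential, and it is precisely this modular invariance that makes $P_1, P_2$ eligible inputs for Theorem \ref{thmA}. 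Once that is secured, the remaining steps---freeness via Theorem \ref{thmA}, invariance of the conjugated copy under the centralizer unitary $v$, and the concluding appeal to Takesaki's theorem---are comparatively formal.
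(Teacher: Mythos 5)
Your proposal is correct and follows essentially the same route as the paper: upgrade each embedding-with-expectation to a state-preserving, modular-invariant embedding via \cite[Theorem 4.20]{AH12}, apply Theorem \ref{thmA} to get the free position with a centralizer unitary $v$, and conclude by uniqueness of the free product together with Takesaki's theorem. The only compression is in your first step, where the paper makes the mechanism explicit: one forms the faithful normal state $\psi_i = \varphi_i \circ \pi_i^{-1} \circ \rE_i$ on $M^\omega$, uses the strict homogeneity of the state space of $M^\omega$ (which is what \cite[Theorem 4.20]{AH12} actually provides) to find $u_i \in \mathcal U(M^\omega)$ with $\theta^\omega = \psi_i \circ \Ad(u_i)$, and then conjugates $\pi_i(M_i)$ by $u_i$ to obtain the state-preserving, modular-invariant copy $P_i$.
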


Very recently, Ando-Haagerup-Winsl\o w showed in \cite[Theorem 1.1]{AHW13} that a von Neumann algebra $M$ with separable predual satisfies Kirchberg's quotient weak expectation property (QWEP) \cite{Ki92} if and only if $M$ embeds with expectation into $R_\infty^\omega$, where $R_\infty$ denotes the unique hyperfinite factor of type ${\rm III_1}$. Hence, a combination of their result and  Corollary \ref{corB} shows that the class of QWEP von Neumann algebras with separable predual is stable under free product. We point out that this last result can also be proven using Connes-Tomita-Takesaki modular theory and Brown-Dykema-Jung's result \cite[Corollary 4.5]{BDJ06}. However, we believe that it is interesting to have a new and direct proof that the QWEP for von Neumann algebras is stable under free product, without relying on Connes-Tomita-Takesaki modular theory.

It also follows from the above discussion that any free product of {\em amenable} von Neumann algebras with respect to arbitrary faithful normal states has the QWEP. Observe that Shlyakhtenko's free Araki-Woods factors $\Gamma(H_\R, U_t)\dpr$ associated with almost periodic orthogonal representations $U : \R \to \mathcal O(H_\R)$ are $\ast$-isomorphic to free products of type ${\rm I}$ von Neumann algebras \cite{Sh96}. Thus, Corollary \ref{corB} gives an alternate proof of QWEP for the almost periodic free Araki-Woods factors \cite{PS02}. We refer to \cite{No05} for more general results on QWEP for $q$-Araki-Woods von Neumann algebras.

An inclusion of von Neumann algebras $\mathcal N \subset \mathcal M$ with faithful normal conditional expectation $\rE_{\mathcal N} : \mathcal M \to \mathcal N$ is said to have the {\em relative Dixmier property} if for every $x \in \mathcal M$, we have
$$\overline{\rm co}^{\|\cdot\|_\infty}\{ u x u^* : u \in \mathcal U( \mathcal N)\} \cap \mathcal N' \cap \mathcal M \neq \emptyset.$$
Here, $\overline{\rm co}^{\|\cdot\|_\infty}\{ u x u^* : u \in \mathcal U(\mathcal N)\}$ denotes the closure with respect to the uniform norm $\|\cdot\|_\infty$ of the convex hull of the uniformly bounded set $\{ u x u^* : u \in \mathcal U(\mathcal  N)\}$. Haagerup showed in \cite[Theorem 3.1]{Ha88} that whenever $M$ is a $\sigma$-finite factor of type ${\rm III_\lambda}$ with $0 < \lambda < 1$ and $\varphi \in M_\ast$ is a faithful normal $\frac{2 \pi}{|\log(\lambda)|}$-periodic state, the inclusion $M^\varphi \subset M$ has the relative Dixmier property. Popa showed in \cite[Corollary 2.4]{Po95b} that whenever $M$ is a ${\rm II_1}$ factor and $Q \subset M$ is an irreducible subfactor, the inclusion $Q^\omega \subset M^\omega$ has the relative Dixmier property. We refer to \cite{Po97, Po13a} for other results regarding the relative Dixmier property.

Following the strategy of \cite{Po95b} and using Theorem \ref{thmA}, we obtain a new class of inclusions of von Neumann algebras $\mathcal N \subset \mathcal M$ with the relative Dixmier property. 

\begin{letterthm}\label{thmC}
Let $(M, \varphi)$ be any non-type {\rm I} $\sigma$-finite factor  endowed with a faithful normal state and $Q \subset M^\varphi$ any von Neumann subalgebra satisfying $Q' \cap M = \C 1$. Let $\omega \in \beta(\N) \setminus \N$ be any non-principal ultrafilter. Then the inclusion $Q^\omega \subset M^\omega$ satisfies the {\em relative Dixmier property}, that is, for all $x \in M^\omega$, we have
$$\overline{\rm co}^{\|\cdot\|_\infty}\{ u x u^* : u \in \mathcal U(Q^\omega)\} \cap \C 1 = \{ \varphi^\omega(x)1\}.$$
\end{letterthm}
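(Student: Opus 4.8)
The plan is to follow Popa's averaging strategy, the crucial extra point being that the averaging unitaries come from the centralizer, so that conjugation by them preserves $\varphi^\omega$. First I would dispose of the easy inclusion. Since $Q \subset M^\varphi$, we have $Q^\omega \subset (M^\varphi)^\omega \subset (M^\omega)^{\varphi^\omega}$, so every $u \in \mathcal U(Q^\omega)$ is fixed by $(\sigma_t^{\varphi^\omega})$ and satisfies $\varphi^\omega(u y u^*) = \varphi^\omega(y)$ for all $y \in M^\omega$. As $\varphi^\omega$ is normal, it is $\|\cdot\|_\infty$-continuous, hence constant equal to $\varphi^\omega(x)$ on $\overline{\rm co}^{\|\cdot\|_\infty}\{uxu^*: u \in \mathcal U(Q^\omega)\}$; in particular, if $c 1$ lies in this set then $c = \varphi^\omega(x)$, which already gives the inclusion $\subseteq \{\varphi^\omega(x)1\}$. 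Replacing $x$ by $x - \varphi^\omega(x)1$ (which only translates the convex hull by a scalar, since $u 1 u^* = 1$), it remains to prove that $0$ lies in the closed convex hull whenever $\varphi^\omega(x) = 0$.

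Assume now $\varphi^\omega(x) = 0$. The idea is to produce many mutually free conjugates of $x$ and average them. First I would build a suitable algebra containing $x$: let $P_2$ be the von Neumann algebra generated by $\{\sigma_t^{\varphi^\omega}(x) : t \in \R\}$. By construction $P_2$ is globally invariant under $(\sigma_t^{\varphi^\omega})$, and since $t \mapsto \sigma_t^{\varphi^\omega}(x)$ is $\sigma$-strongly continuous, $P_2$ is already generated by the countable family $\{\sigma_q^{\varphi^\omega}(x): q \in \Q\}$; being countably generated and $\sigma$-finite, it has separable predual. Then I would iterate Theorem \ref{thmA}. Put $v_1 = 1$ and, having found $v_1, \dots, v_m \in \mathcal U(Q^\omega)$ such that $v_1 P_2 v_1^*, \dots, v_m P_2 v_m^*$ are $\ast$-free, apply Theorem \ref{thmA} to the pair $P_2$ and $v_1 P_2 v_1^* \vee \cdots \vee v_m P_2 v_m^*$ — both have separable predual and are modular invariant, each $v_j P_2 v_j^*$ because $v_j$ lies in the centralizer — to obtain $v_{m+1} \in \mathcal U(Q^\omega)$ with $v_{m+1} P_2 v_{m+1}^*$ $\ast$-free from the join of the previous ones. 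By associativity of free independence, $v_1 P_2 v_1^*, \dots, v_{m+1} P_2 v_{m+1}^*$ are then mutually $\ast$-free.

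Fixing $n$ and setting $x_k := v_k x v_k^* \in v_k P_2 v_k^*$ for $1 \le k \le n$, I note that each $x_k$ is centered, $\varphi^\omega(x_k) = \varphi^\omega(x) = 0$ (again because $v_k$ is in the centralizer), with $\|x_k\|_\infty = \|x\|_\infty$. Thus $x_1, \dots, x_n$ is a $\ast$-free, centered family, and the heart of the argument is the norm estimate
$$\left\| \frac{1}{n}\sum_{k=1}^n x_k \right\|_\infty \le \frac{C}{\sqrt n}\, \|x\|_\infty$$
with a universal constant $C$. This is the standard free-probability bound for sums of centered $\ast$-free elements; concretely it follows from an inequality of the form $\|\sum_k x_k\|_\infty \le \|\sum_k x_k^* x_k\|_\infty^{1/2} + \|\sum_k x_k x_k^*\|_\infty^{1/2} + \max_k \|x_k\|_\infty$ together with $\|\sum_k x_k^* x_k\|_\infty \le n \|x\|_\infty^2$. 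Since each average $\frac1n\sum_k v_k x v_k^*$ is a convex combination of elements $u x u^*$ with $u \in \mathcal U(Q^\omega)$, letting $n \to \infty$ would show $0 \in \overline{\rm co}^{\|\cdot\|_\infty}\{uxu^*\}$, which together with the first paragraph completes the proof.

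The step I expect to require the most care is the norm estimate above: in Popa's tracial setting it comes from the interplay between $\|\cdot\|_2$ and $\|\cdot\|_\infty$ for the trace, whereas here $\varphi^\omega$ is not a trace and the $x_k$ do not lie in the centralizer. The point to verify is that the free-sum norm inequalities remain valid with respect to the non-tracial free-product state $\varphi^\omega$ — which they do, as the underlying estimates concern the Fock space model of the reduced free product and do not use traciality. Everything else (the reduction, the construction of $P_2$, and the iteration of Theorem \ref{thmA}) I expect to be routine.
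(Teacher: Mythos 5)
Your proof is correct, and it takes a genuinely different route from the paper's. The easy inclusion is handled identically (unitaries of $Q^\omega$ lie in the centralizer of $\varphi^\omega$, so the state is constant on the norm-closed convex hull), and both arguments rest on Theorem \ref{thmA}; the difference is in how freeness is converted into an operator-norm estimate. The paper first approximates $x$ in $\|\cdot\|_\infty$ by a finite linear combination of projections $\sum_j \alpha_j f_j$, applies Theorem \ref{thmA} \emph{once} to make a diffuse $A \subset Q^\omega$ $\ast$-free from the von Neumann algebra $B$ generated by $\{\sigma_t^{\varphi^\omega}(f_j)\}$, and then averages over the powers of the unitary $u = \sum_{i=1}^n \lambda^i p_i$ attached to a partition of unity $\{p_i\}_{i=1}^n \subset A$ with $\varphi^\omega(p_i) = 1/n$, using the identity $\frac{1}{n}\sum_{\ell=1}^n u^\ell x u^{-\ell} = \sum_i p_i x p_i$ together with Voiculescu's explicit two-free-projections computation \cite[Example 2.8]{Vo86} of $\|p_i(f_j - \varphi^\omega(f_j)1)p_i\|_\infty$. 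You instead keep $x$ general (centered after translation), iterate Theorem \ref{thmA} to get, for each $n$, unitaries $v_1, \dots, v_n \in \mathcal U(Q^\omega)$ with $v_1 P_2 v_1^*, \dots, v_n P_2 v_n^*$ mutually $\ast$-free --- the iteration is legitimate, since each $v_k P_2 v_k^*$ is modular invariant ($v_k$ lying in the centralizer), a finite join of countably generated subalgebras of the $\sigma$-finite $M^\omega$ has separable predual, and associativity of freeness applies --- and then invoke a Khintchine-type bound for sums of centered $\ast$-free elements. That bound is the one ingredient external to the paper, and your assessment of it is right: it holds for arbitrary non-tracial free-product states, via the creation/diagonal/annihilation decomposition on the free-product Hilbert space (creation parts have orthogonal ranges), which gives $\|\sum_k x_k\|_\infty \le \max_k \|x_k\|_\infty + (\sum_k \varphi^\omega(x_k^* x_k))^{1/2} + (\sum_k \varphi^\omega(x_k x_k^*))^{1/2}$; this is Ricard and Xu's free Khintchine inequality (Crelle 2006), of which your stated form is a weaker consequence. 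To apply it inside $M^\omega$ you should also record the standard fact, used in the paper's proof of Corollary \ref{corB} as well, that $(v_1P_2v_1^* \vee \cdots \vee v_nP_2v_n^*, \varphi^\omega)$ is state-preservingly isomorphic to the free product of the $(v_kP_2v_k^*, \varphi^\omega|_{v_kP_2v_k^*})$, by GNS uniqueness for faithful normal states, so that the Fock-space estimate really computes the norm in $M^\omega$. In exchange for iterating Theorem \ref{thmA} and this somewhat deeper free-probability input, you avoid the reduction to projections and obtain a quantitative rate, the average of $n$ conjugates having norm $O(n^{-1/2})\|x\|_\infty$; the paper's method needs only one application of Theorem \ref{thmA} and an elementary two-projection computation, but requires the approximation of $x$ by linear combinations of projections and the root-of-unity averaging trick.
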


Theorem \ref{thmC} moreover implies that there exists a unique (not necessarily normal) conditional expectation $\rE : M^\omega \to Q^\omega$. It is the unique $\varphi^\omega$-preserving conditional expectation $\rE_{Q^\omega} : M^\omega \to Q^\omega$. Let now $M = R_\infty$ be the unique hyperfinite factor of type ${\rm III_1}$ and choose a faithful normal state $\varphi \in M_\ast$ such that  $(M^\varphi)' \cap M = \C 1$. By Theorem \ref{thmC}, the inclusion $(M^\varphi)^\omega \subset M^\omega$ has the relative Dixmier property.

\subsection*{Acknowledgments}

This work was done when the first named author was visiting the Research Institute for Mathematical Sciences (RIMS) in Kyoto during Summer 2014. He gratefully acknowledges the kind hospitality of RIMS. The authors thank Narutaka Ozawa for insightful discussions about the QWEP conjecture and Sorin Popa for useful remarks. They finally thank the anonymous referee for providing valuable comments.

\section{Preliminaries}\label{preliminaries}

\subsection*{Background on $\sigma$-finite von Neumann algebras}

Let $M$ be any $\sigma$-finite von Neumann algebra. We denote by $\Ball(M)$ the unit ball of $M$ with respect to the uniform norm $\|\cdot\|_\infty$, $\mathcal U(M)$ the group of unitaries in $M$ and $\mathcal Z(M)$ the center of $M$. Let $\varphi \in M_\ast$ be any faithful normal state. For all $x \in M$, write $\|x\|_\varphi = \varphi(x^*x)^{1/2}$. Recall that the strong topology on uniformly bounded subsets of $M$ coincides with the topology defined by $\|\cdot\|_\varphi$.

For every $f \in \rL^1(\R)$, we define the {\em Fourier transform} of $f$ by 
$$\widehat f(\xi) = \int_\R f(t) \exp({\rm i t \xi}) \, {\rm d}t, \forall \xi \in \widehat \R = \R.$$
For every $f \in \rL^1(\R)$ and every $x \in M$, put
$$\sigma_f^\varphi (x) = \int_\R f(t) \sigma_t^\varphi(x) \, {\rm d}t.$$
For every $x \in M$, put
$$\Sp_{\sigma^\varphi}(x) = \left\{ \xi \in \widehat \R : \widehat f(\xi) = 0 \text{ for all } f \in \rL^1(\R) \text{ such that } \sigma_f^{\varphi}(x) = 0 \right\}.$$
For every subset $E \subset \widehat \R$, the {\em spectral subspace} of $\sigma^\varphi$ in $M$ associated with $E$ is defined by
$$M(\sigma^\varphi, E) = \left\{ x \in M : \Sp_{\sigma^\varphi}(x) \subset E \right \}.$$
The spectral subspaces satisfiy the following well-known properties (see e.g.\ \cite[Corollary XI.1.8]{Ta03}): for all subsets $E, F \subset \widehat \R$, we have
\begin{itemize}
\item $M(\sigma^\varphi, E)^* = M(\sigma^{\varphi}, -E)$,
\item $M(\sigma^{\varphi}, E) M(\sigma^\varphi, F) \subset M(\sigma^\varphi, \overline{E + F})$ and 
\item $M(\sigma^\varphi, \{0\}) = M^\varphi$.
\end{itemize}

The next proposition will be very useful to prove the main technical results of Section \ref{incremental}.

\begin{prop}[{\cite[Lemma 4.13]{AH12}}]\label{analytic}
Let $(M, \varphi)$ be any $\sigma$-finite von Neumann algebra endowed with a faithful normal state. Let $a > 0$. Any element $x \in M(\sigma^\varphi, [-a, a])$ is analytic with respect to the modular automorphism group $(\sigma_t^\varphi)$ and satisfies
$$\|\sigma_z^\varphi(x)\|_\infty \leq C(a, z) \|x\|_\infty, \forall z \in \C$$
where $C(a, z) = 2 \exp(2a |\Im(z)|) + \exp(a |\Im(z)|)$.
\end{prop}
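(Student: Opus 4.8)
The plan is to realize $x$ as a convolution $\sigma_f^\varphi(x)$ against a band-limited kernel $f$ whose physical-space profile extends to an entire function, and then to transport the entireness and the growth of $f$ onto the map $z \mapsto \sigma_z^\varphi(x)$. First I would choose the kernel carefully. Writing $\Lambda_b(\xi) = \max(1 - |\xi|/b, 0)$ for the triangular (Fej\'er) symbol of width $2b$, I set $\widehat f = 2\Lambda_{2a} - \Lambda_a$. A direct check shows $\widehat f \equiv 1$ on $[-a,a]$, that $\widehat f$ is supported in $[-2a,2a]$, and that the corresponding $f \in \rL^1(\R)$ equals $f = 2 k_{2a} - k_a$, where $k_b(t) = \frac{2}{\pi b}\,\frac{\sin^2(bt/2)}{t^2}$ is the nonnegative Fej\'er kernel with $\widehat{k_b} = \Lambda_b$. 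Since $\widehat f \equiv 1$ on a set containing $\Sp_{\sigma^\varphi}(x) \subset [-a,a]$, the standard Arveson spectral-subspace calculus gives the recovery identity $\sigma_f^\varphi(x) = x$; to avoid the boundary case where $\Sp_{\sigma^\varphi}(x)$ meets $\pm a$, I would first run the whole argument with $a' > a$ in place of $a$, so that $\widehat f \equiv 1$ on an open neighbourhood of the spectrum (which is what the recovery identity genuinely requires), and let $a' \downarrow a$ at the end.

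Next I would define the candidate extension $G(z) := \int_\R f(t-z)\,\sigma_t^\varphi(x)\,dt$, where $f$ now denotes the entire Paley--Wiener extension of the kernel. For real $z = s$, the change of variables $u = t-s$ together with the group law gives $G(s) = \sigma_f^\varphi(\sigma_s^\varphi(x)) = \sigma_s^\varphi(\sigma_f^\varphi(x)) = \sigma_s^\varphi(x)$, so $G$ restricts to $s \mapsto \sigma_s^\varphi(x)$ on $\R$. The integral converges as a weak-$\ast$ integral for every $z \in \C$ because $t \mapsto \sigma_t^\varphi(x)$ is $\sigma$-weakly continuous with $\|\sigma_t^\varphi(x)\|_\infty = \|x\|_\infty$ while $t \mapsto f(t-z)$ is integrable, and for each $\phi \in M_\ast$ the scalar function $z \mapsto \phi(G(z)) = \int_\R f(t-z)\,\phi(\sigma_t^\varphi(x))\,dt$ is entire by differentiation under the integral sign, justified by the exponential-type bounds on $f$ and $f'$ along horizontal strips. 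Thus $G$ is weakly entire, so $x$ is analytic for $(\sigma_t^\varphi)$ with $\sigma_z^\varphi(x) = G(z)$.

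Finally, the norm estimate reads $\|\sigma_z^\varphi(x)\|_\infty \le \|x\|_\infty \int_\R |f(t-z)|\,dt$, and since the integration runs over the horizontal line through $-z$, this integral depends only on $y := \Im z$. The crucial input is the exact line integral for the Fej\'er kernel, namely $\int_\R |k_b(\tau + iy)|\,d\tau = \sinh(b|y|)/(b|y|) \le e^{b|y|}$, which I would obtain from the identity $|\sin(b(\tau+iy)/2)|^2 = \sinh^2(by/2) + \sin^2(b\tau/2)$ together with the elementary integrals $\int_\R (\tau^2+y^2)^{-1}\,d\tau = \pi/|y|$ and $\int_\R \cos(b\tau)(\tau^2+y^2)^{-1}\,d\tau = (\pi/|y|)\,e^{-b|y|}$. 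Applying this with $b = 2a$ and $b = a$ and using $f = 2k_{2a} - k_a$ yields $\int_\R |f(\tau+iy)|\,d\tau \le 2\,e^{2a|y|} + e^{a|y|} = C(a,z)$, which is exactly the claimed bound.

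I expect the main obstacle to be pinning down this precise constant rather than a merely asymptotic one: it forces the specific trapezoidal symbol $2\Lambda_{2a} - \Lambda_a$ and the exact evaluation of the Fej\'er line integral above, and it is there that the two exponential scales $e^{2a|\Im z|}$ and $e^{a|\Im z|}$, with their coefficients $2$ and $1$, arise. By contrast, the recovery identity $\sigma_f^\varphi(x) = x$ and the weak analyticity of $G$ are standard facts from Arveson spectral theory and vector-valued integration, and the $a' \downarrow a$ passage disposes of the only delicate point in invoking them.
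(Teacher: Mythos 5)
Your proposal is correct and is essentially the argument behind the result the paper cites ([AH12, Lemma 4.13]): your kernel with trapezoidal symbol $2\Lambda_{2a}-\Lambda_a$ is precisely the de la Vall\'ee Poussin kernel $D_a = 2k_{2a}-k_a$ reproduced in the paper's proof of Proposition \ref{analytic-sequence}, and the constant $C(a,z)$ comes from the same horizontal-line estimate $\int_\R |D_a(t+\mathrm{i}y)|\,\mathrm{d}t \le 2e^{2a|y|}+e^{a|y|}$ obtained via the Fej\'er decomposition. Your $a'\downarrow a$ regularization correctly handles the neighborhood hypothesis in the recovery identity, matching the way the paper itself invokes \cite[Lemma XI.1.3(iii)]{Ta03} with $\widehat{D_{2a}}\equiv 1$ on a neighborhood of $[-a,a]$.
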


We also recall the following well-known result.

\begin{thm}[{\cite[Theorem 4.2]{Ta03}}]
Let $(M, \varphi)$ be any $\sigma$-finite von Neumann algebra endowed with a faithful normal state and $N \subset M$ any von Neumann subalgebra. The following conditions are equivalent:
\begin{enumerate}
\item There exists a (unique) $\varphi$-preserving conditional expectation $\rE_N : M \to N$.
\item The von Neumann subalgebra $N$ is globally invariant under the modular automorphism group $(\sigma_t^\varphi)$.
\end{enumerate}
\end{thm}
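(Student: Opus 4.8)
The plan is to pass to the GNS representation and transport both conditions into statements about the Jones projection $e_N$ onto the closed subspace $\rL^2(N) := \overline{N \xi_\varphi} \subseteq \rL^2(M, \varphi)$, where $\xi_\varphi$ is the cyclic and separating vector implementing $\varphi$. Throughout I identify $M$ with $\pi_\varphi(M)$ and write $J_\varphi, \Delta_\varphi$ for the modular conjugation and modular operator of $(M, \varphi)$, so that $\Delta_\varphi^{it} x \xi_\varphi = \sigma_t^\varphi(x) \xi_\varphi$ and the closed Tomita operator $S_\varphi$, with core $M\xi_\varphi$ and polar decomposition $S_\varphi = J_\varphi \Delta_\varphi^{1/2}$, satisfies $S_\varphi x \xi_\varphi = x^* \xi_\varphi$. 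Since $N$ preserves $\rL^2(N)$ and is $\ast$-closed we have $e_N \in N'$, and since $\xi_\varphi \in \rL^2(N)$ we have $e_N \xi_\varphi = \xi_\varphi$. The guiding principle is that each of (1) and (2) is equivalent to the commutation $[e_N, \Delta_\varphi^{it}] = 0$, i.e.\ to the invariance $\Delta_\varphi^{it} \rL^2(N) = \rL^2(N)$.

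For $(2) \Rightarrow (1)$ I would argue as follows. If $\sigma_t^\varphi(N) = N$, then $\Delta_\varphi^{it} a \xi_\varphi = \sigma_t^\varphi(a)\xi_\varphi \in \rL^2(N)$ for $a \in N$, so $e_N$ commutes with $\Delta_\varphi$; and since $S_\varphi$ maps $N\xi_\varphi$ into $N\xi_\varphi$, it also preserves $\rL^2(N)$, whence $[e_N, J_\varphi] = 0$. Standard Tomita--Takesaki theory then identifies $\rL^2(N)$ with the standard form of $(N, \varphi|_N)$, the commutant of $N$ on $\rL^2(N)$ being $J_\varphi N J_\varphi|_{\rL^2(N)}$. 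For $x \in M$ the compression $e_N x e_N$ commutes with every $J_\varphi c J_\varphi$ ($c \in N$): indeed $J_\varphi c J_\varphi \in M'$ commutes with $x$, and it commutes with $e_N$ by $[e_N, J_\varphi] = 0$ together with $e_N \in N'$. Hence $e_N x e_N|_{\rL^2(N)} \in N$ defines an element $\rE_N(x)$, and one checks that $\rE_N$ is a normal conditional expectation with $\varphi \circ \rE_N = \varphi$, since $\varphi(\rE_N(x)) = \langle e_N x e_N \xi_\varphi, \xi_\varphi\rangle = \langle x\xi_\varphi, \xi_\varphi\rangle = \varphi(x)$.

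For $(1) \Rightarrow (2)$, which I regard as the more delicate direction, I would start from the identity $e_N x \xi_\varphi = \rE_N(x)\xi_\varphi$ valid for all $x \in M$ (this is precisely $\varphi$-preservation). Because $\rE_N$ is a normal $\ast$-map, on the core $M\xi_\varphi$ one computes $S_\varphi e_N x\xi_\varphi = \rE_N(x)^*\xi_\varphi = e_N x^*\xi_\varphi = e_N S_\varphi x\xi_\varphi$, so $e_N$ commutes with $S_\varphi$ on a core; since $e_N$ is bounded and $S_\varphi$ is closed, $e_N$ commutes with $S_\varphi$, hence (being bounded and self-adjoint) with both factors of its polar decomposition, and therefore with $\Delta_\varphi$, with $J_\varphi$, and with every $\Delta_\varphi^{it}$. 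Finally I would use the characterization $N = \{x \in M : [x, e_N] = 0\}$, the nontrivial inclusion being that $[x,e_N]=0$ gives $x\xi_\varphi = e_N x\xi_\varphi = \rE_N(x)\xi_\varphi$, whence $x = \rE_N(x) \in N$ since $\xi_\varphi$ is separating. Then for $a \in N$ one has $\sigma_t^\varphi(a) e_N = \Delta_\varphi^{it} a e_N \Delta_\varphi^{-it} = \Delta_\varphi^{it} e_N a \Delta_\varphi^{-it} = e_N \sigma_t^\varphi(a)$, so $\sigma_t^\varphi(a)$ commutes with $e_N$, i.e.\ $\sigma_t^\varphi(a) \in N$; applying this to $-t$ yields $\sigma_t^\varphi(N) = N$.

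The uniqueness asserted in (1) follows from the bimodule property: any $\varphi$-preserving conditional expectation satisfies $\varphi(\rE_N(x)a) = \varphi(\rE_N(xa)) = \varphi(xa)$ for all $a \in N$, and faithfulness of $\varphi|_N$ forces $\rE_N(x)$ to be uniquely determined. The main obstacle I anticipate is the unbounded-operator bookkeeping: in $(1)\Rightarrow(2)$ one must justify rigorously that a bounded operator commuting with the closed Tomita operator on a core genuinely commutes with the unbounded $\Delta_\varphi$ and its imaginary powers while preserving the relevant domains, and dually in $(2)\Rightarrow(1)$ that the modular invariance of $N$ is exactly what allows $J_\varphi$ and $\Delta_\varphi$ to restrict to the modular data of $(N, \varphi|_N)$ on $\rL^2(N)$. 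These are the points where the absence of a trace makes the argument genuinely Tomita--Takesaki-theoretic rather than a routine projection computation.
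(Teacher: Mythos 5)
The paper never proves this statement: it is recalled verbatim, with a citation, from Takesaki's book ({\cite[Theorem IX.4.2]{Ta03}}), so the only meaningful comparison is with the standard proof in that reference, and your sketch is essentially that proof --- GNS space, Jones projection $e_N$, and the translation of both conditions into commutation of $e_N$ with the modular data. Your direction $(1)\Rightarrow(2)$ is a correct outline: the identity $e_N x \xi_\varphi = \rE_N(x)\xi_\varphi$, commutation with $S_\varphi$ first on the core and then (by closedness) everywhere, passage to the polar decomposition, and the characterization $N = \{ x \in M : [x, e_N] = 0\}$ are exactly the right steps. The uniqueness argument via $\varphi(\rE_N(x)a) = \varphi(xa)$ and faithfulness of $\varphi|_N$ is also correct.

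The one step you should repair is in $(2)\Rightarrow(1)$: the inference ``since $S_\varphi$ maps $N\xi_\varphi$ into $N\xi_\varphi$, it also preserves $\rL^2(N)$, whence $[e_N, J_\varphi]=0$.'' For an unbounded closed operator, invariance of a dense subspace of $\rL^2(N)$ says nothing about the remaining vectors of $\rL^2(N) \cap \operatorname{dom}(S_\varphi)$: the graph-norm closure of $N\xi_\varphi$ is a priori the Tomita operator of $N$, not the restriction of $S_\varphi$, so this implication is circular as stated; and even granting it, invariance of $\rL^2(N)$ under $S_\varphi$ does not by itself split into commutation with the two factors $J_\varphi$ and $\Delta_\varphi^{1/2}$. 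The standard patch uses what you have already established, namely $[e_N, \Delta_\varphi^{it}]=0$ for all $t$, hence commutation of $e_N$ with every bounded Borel function of $\Delta_\varphi$: for $a \in N$ one has $J_\varphi a^*\xi_\varphi = \Delta_\varphi^{1/2} a \xi_\varphi$ and $e_N \Delta_\varphi^{1/2} a \xi_\varphi = \Delta_\varphi^{1/2} e_N a \xi_\varphi = \Delta_\varphi^{1/2} a \xi_\varphi$, so $J_\varphi(N\xi_\varphi) \subset \rL^2(N)$; since $J_\varphi$ is anti-unitary and bounded this gives $J_\varphi \rL^2(N) \subset \rL^2(N)$, hence $J_\varphi e_N J_\varphi \leq e_N$, and conjugating once more by $J_\varphi$ yields $J_\varphi e_N J_\varphi = e_N$. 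The same commutations are what justify your appeal to ``standard Tomita--Takesaki theory'' identifying $(J_\varphi, \Delta_\varphi)$ restricted to $\rL^2(N)$ with the modular data of $(N, \varphi|_N)$ (e.g.\ via uniqueness of the modular group through the KMS condition), which is needed to know that the commutant of $N$ on $\rL^2(N)$ is $J_\varphi N J_\varphi|_{\rL^2(N)}$ before you can place $e_N x e_N|_{\rL^2(N)}$ in $N$. With these repairs your argument coincides with the cited one and is correct.
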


\subsection*{Free product von Neumann algebras}

Let $I$ be any non-empty set. For all $i \in I$, let $(M_i, \varphi_i)$ be any $\sigma$-finite von Neumann algebra endowed with a faithful normal state. The {\em free product von Neumann algebra} $(M, \varphi) = \ast_{i \in I} (M_i, \varphi_i)$ is the unique (up to state-preserving isomorphism) von Neumann algebra $M$ generated by $M_i$ with $i \in I$ and where the faithful normal state $\varphi$ satisfies the following $\ast$-{\em freeness condition}:
\begin{equation}\label{freeness}
\varphi(x_1 \cdots x_n) = 0 \; \text{ whenever } \; x_j \in M_{i_j} \ominus \C, i_1, \dots, i_n \in I \; \text{ and } \; i_1 \neq \cdots \neq  i_{n}.
\end{equation}
We will say that the von Neumann subalgebras $M_i$ are $\ast$-{\em free inside} $M$ {\em with respect to} $\varphi$ if Equation $(\ref{freeness})$ is satisfied. Here and in what follows, we denote by $M_i \ominus \C 1 = \ker(\varphi_i)$. We refer to the product $x_1 \cdots x_n$ where $x_j \in M_{i_j} \ominus \C 1$, $i_1, \dots, i_n \in I$ and $i_1 \neq \cdots \neq i_{n}$ as a {\em reduced word} in $(M_{i_1} \ominus \C 1) \cdots (M_{i_n} \ominus \C 1)$ of {\em length} $n \geq 1$. The linear span of $1$ and of all the reduced words in $(M_{i_1} \ominus \C 1) \cdots (M_{i_n} \ominus \C 1)$ where $n \geq 1$, $i_1, \dots, i_n \in I$ and $i_1 \neq \cdots \neq i_{n}$ forms a unital $\sigma$-strongly dense $\ast$-subalgebra of $M$.

For all $t \in \R$, we have $\sigma_t^\varphi = \ast_{i \in I} \sigma_t^{\varphi_i}$ (see \cite[Lemma 1]{Ba93} and \cite[Theorem 1]{Dy92}). By \cite[Theorem IX.4.2]{Ta03}, there exists a unique $\varphi$-preserving faithful normal conditional expectation $\rE_{M_i} : M \to M_i$. Moreover, we have $\rE_{M_i}(x_1 \cdots x_n) = 0$ for all the reduced words $x_1 \cdots x_n$ which contains at least one letter from $M_j \ominus \C 1$ with $j \neq i$ (see \cite[Lemma 2.1]{Ue11}). For more on free product von Neumann algebras, we refer the reader to \cite{Ue98, Ue11, Vo85, Vo92}.

\subsection*{Ultraproduct von Neumann algebras}

Let $M$ be any $\sigma$-finite von Neumann algebra. Define
\begin{align*}
\mathcal I_\omega(M) &= \left\{ (x_n)_n \in \ell^\infty(\N, M) : x_n \to 0 \ast\text{-strongly as } n \to \omega \right\} \\
\mathcal M^\omega(M) &= \left \{ (x_n)_n \in \ell^\infty(\N, M) :  (x_n)_n \, \mathcal I_\omega(M) \subset \mathcal I_\omega(M) \text{ and } \mathcal I_\omega(M) \, (x_n)_n \subset \mathcal I_\omega(M)\right\}.
\end{align*}

We have that the {\em multiplier algebra} $\mathcal M^\omega(M)$ is a C$^*$-algebra and $\mathcal I_\omega(M) \subset \mathcal M^\omega(M)$ is a norm closed two-sided ideal. Following \cite{Oc85}, we define the {\em ultraproduct von Neumann algebra} $M^\omega$ by $M^\omega = \mathcal M^\omega(M) / \mathcal I_\omega(M)$. We denote the image of $(x_n)_n \in \mathcal M^\omega(M)$ by $(x_n)^\omega \in M^\omega$. 

For all $x \in M$, the constant sequence $(x)_n$ lies in the multiplier algebra $\mathcal M^\omega(M)$. We will then identify $M$ with $(M + \mathcal I_\omega(M))/ \mathcal I_\omega(M)$ and regard $M \subset M^\omega$ as a von Neumann subalgebra. The map $\rE_\omega : M^\omega \to M : (x_n)^\omega \mapsto \sigma \text{-weak} \lim_{n \to \omega} x_n$ is a faithful normal conditional expectation. For every faithful normal state $\varphi \in M_\ast$, the formula $\varphi^\omega = \varphi \circ \rE_\omega$ defines a faithful normal state on $M^\omega$. Observe that $\varphi^\omega((x_n)^\omega) = \lim_{n \to \omega} \varphi(x_n)$ for all $(x_n)^\omega \in M^\omega$.

Let $Q \subset M$ be any von Neumann subalgebra with faithful normal conditional expectation $\rE_Q : M \to Q$. Choose a faithful normal state $\varphi$ on $Q$ and still denote by $\varphi$ the faithful normal state $\varphi \circ \rE_Q$ on $M$. We have $\ell^\infty(\N, Q) \subset \ell^\infty(\N, M)$, $\mathcal I_\omega(Q) \subset \mathcal I_\omega(M)$ and $\mathcal M^\omega(Q) \subset \mathcal M^\omega(M)$. We will then identify $Q^\omega = \mathcal M^\omega(Q) / \mathcal I_\omega(Q)$ with $(\mathcal M^\omega(Q) + \mathcal I_\omega(M)) / \mathcal I_\omega(M)$ and regard $Q^\omega \subset M^\omega$ as a von Neumann subalgebra. Observe that the norm $\|\cdot\|_{(\varphi |_Q)^\omega}$ on $Q^\omega$ is the restriction of the norm $\|\cdot\|_{\varphi^\omega}$ to $Q^\omega$. Observe moreover that $(\rE_Q(x_n))_n \in \mathcal I_\omega(Q)$ for all $(x_n)_n \in \mathcal I_\omega(M)$ and $(\rE_Q(x_n))_n \in \mathcal M^\omega(Q)$ for all $(x_n)_n \in \mathcal M^\omega(M)$. Therefore, the mapping $\rE_{Q^\omega} : M^\omega \to Q^\omega : (x_n)^\omega \mapsto (\rE_Q(x_n))^\omega$ is a well-defined conditional expectation satisfying $\varphi^\omega \circ \rE_{Q^\omega} = \varphi^\omega$. Hence, $\rE_{Q^\omega} : M^\omega \to Q^\omega$ is a faithful normal conditional expectation.

\begin{prop}\label{analytic-sequence}
Let $(M, \varphi)$ be any $\sigma$-finite von Neumann algebra endowed with a faithful normal state.  Let $a > 0$. Then for every $x \in M^\omega(\sigma^{\varphi^\omega}, [-a, a])$, there exists $(x_n)_n \in \mathcal M^\omega(M)$ such that $x = (x_n)^\omega$ and $x_n\in M(\sigma^{\varphi},[-4a,4a])$ for every $n \in \N$.
\end{prop}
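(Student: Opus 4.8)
The plan is to start with an arbitrary representing sequence for $x$ and then cut down each entry using a Fourier-multiplier (spectral projection) to force it into the spectral subspace $M(\sigma^\varphi, [-4a, 4a])$, while controlling the error so that the modified sequence still represents the same element $x$. First I would fix a representing sequence $x = (y_n)^\omega$ with $(y_n)_n \in \mathcal{M}^\omega(M)$ and $\sup_n \|y_n\|_\infty < \infty$. The natural tool is to choose a function $f \in \rL^1(\R)$ whose Fourier transform $\widehat f$ is identically $1$ on $[-a, a]$ and supported inside $[-4a, 4a]$ (a standard smooth bump construction: take $\widehat f$ to be a smooth function equal to $1$ on $[-a,a]$, equal to $0$ outside $[-2a,2a]$, say, so that the support is comfortably inside $[-4a, 4a]$, and let $f$ be its inverse Fourier transform, which lies in $\rL^1(\R)$ since $\widehat f$ is smooth with compact support). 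Then I would set $x_n = \sigma_f^\varphi(y_n)$ and argue that $(x_n)_n$ is the desired sequence.

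The key verifications are three. First, the spectral-support claim: by the definition of $\Sp_{\sigma^\varphi}$ and the functional-calculus behaviour of the operators $\sigma_f^\varphi$, applying $\sigma_f^\varphi$ restricts the spectrum to the support of $\widehat f$, so $\Sp_{\sigma^\varphi}(\sigma_f^\varphi(y_n)) \subset \operatorname{supp}(\widehat f) \subset [-4a, 4a]$, giving $x_n \in M(\sigma^\varphi, [-4a, 4a])$ for every $n$. Second, I must check that $(x_n)_n$ still lies in $\mathcal{M}^\omega(M)$; this follows because $\sigma_f^\varphi$ is a bounded (completely bounded, indeed unital-scaled) map on $M$ of norm at most $\|f\|_{\rL^1}$ that is implemented at the level of the ultraproduct by the corresponding map $\sigma_f^{\varphi^\omega}$ on $M^\omega$, so it preserves the multiplier algebra and descends to the quotient. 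Third, and this is the crux, I must show $(x_n)^\omega = x$, i.e.\ that $\sigma_f^{\varphi^\omega}(x) = x$. This is exactly where the hypothesis $x \in M^\omega(\sigma^{\varphi^\omega}, [-a, a])$ is used: since $\widehat f \equiv 1$ on $[-a, a]$ and $\Sp_{\sigma^{\varphi^\omega}}(x) \subset [-a, a]$, the Fourier multiplier $\sigma_f^{\varphi^\omega}$ acts as the identity on $x$.

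The main obstacle I anticipate is the compatibility of the operator $\sigma_f^\varphi$ at the level of $M$ with its counterpart $\sigma_f^{\varphi^\omega}$ at the level of $M^\omega$, namely the identity $\sigma_f^{\varphi^\omega}((y_n)^\omega) = (\sigma_f^\varphi(y_n))^\omega$. This requires knowing that the modular flow of $\varphi^\omega$ on $M^\omega$ is computed entrywise, $\sigma_t^{\varphi^\omega}((y_n)^\omega) = (\sigma_t^\varphi(y_n))^\omega$ (a fact available from the Ando--Haagerup theory of ultraproduct von Neumann algebras in \cite{AH12}), together with an interchange of the $\rL^1$-integral defining $\sigma_f^\varphi$ with the ultraproduct limit. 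The interchange is justified by the uniform boundedness $\|\sigma_t^\varphi(y_n)\|_\infty \le \|y_n\|_\infty$ and dominated convergence against the $\rL^1$-density $f(t)$, so that the integral and the $\ast$-strong ultralimit commute on bounded sets. Once this compatibility is established, the three verifications above combine immediately to give $x = \sigma_f^{\varphi^\omega}(x) = (\sigma_f^\varphi(y_n))^\omega = (x_n)^\omega$ with each $x_n \in M(\sigma^\varphi, [-4a, 4a])$, which is the assertion. The constant $4a$ (rather than a tighter $2a$) leaves slack that makes the bump-function construction comfortable and absorbs the closure in the spectral-subspace product rule.
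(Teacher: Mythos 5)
Your proposal is correct and follows essentially the same route as the paper's own proof: the paper simply takes for your bump function the explicit de la Vall\'ee Poussin kernel $D_{2a}$, whose Fourier transform equals $1$ on $[-2a,2a]$ and vanishes outside $[-4a,4a]$, so that $x=\sigma_{D_{2a}}^{\varphi^\omega}(x)$ by \cite[Lemma XI.1.3(iii)]{Ta03}, and it sets $x_n=\sigma_{D_{2a}}^{\varphi}(y_n)$ for an arbitrary representing sequence $(y_n)_n\in\mathcal M^\omega(M)$. The compatibility you correctly flag as the crux, namely that $(\sigma_{f}^{\varphi}(y_n))_n\in\mathcal M^\omega(M)$ and $(\sigma_{f}^{\varphi}(y_n))^\omega=\sigma_{f}^{\varphi^\omega}((y_n)^\omega)$, is exactly what the paper quotes from \cite[Lemma 4.1.4]{AH12}; note only that the clean justification of the interchange of the $\rL^1$-integral with the ultraproduct rests on the modular-equicontinuity characterization of $\mathcal M^\omega(M)$ from \cite{AH12}, not on uniform boundedness and dominated convergence alone.
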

\begin{proof}
Following \cite[Definition 4.12]{AH12}, denote by $D_a\colon \R\rightarrow \R$ the de la Vall$\rm\acute{e}e$ Poussin kernel defined by
\begin{equation*}
D_a(t) = \left\{ 
{\begin{array}{ll} \frac{\cos(at) - \cos(2at)}{\pi a t^2} & \text{ if } t \neq 0  \\ 
\frac{3 a}{ 2 \pi} & \text{ if } t = 0.
\end{array}} \right.
\end{equation*}
Its Fourier transform is given by
\begin{equation*}
\widehat {D_a}(\xi) = \left\{ 
{\begin{array}{ll} 1 & \text{ if } |\xi| \leq a  \\ 
2 - \frac{|\xi|}{2} & \text{ if } a \leq |\xi| \leq 2a \\
0 & \text{ if } |\xi| > 2a.
\end{array}} \right.
\end{equation*}

Let $(x_n)_n \in \mathcal M^\omega(M)$ be any element such that $x = (x_n)^\omega$. Since $x\in M^\omega(\sigma^{\varphi^\omega}, [-a, a])$ and since $\widehat{D_{2a}} =1$ on $[-2a,2a]$, we have $x=\sigma_{D_{2a}}^{\varphi^\omega}(x)$ by \cite[Lemma XI.1.3(iii)]{Ta03}. By \cite[Lemma 4.1.4]{AH12}, we have $(\sigma_{D_{2a}}^\varphi(x_n))_n \in \mathcal M^\omega(M)$ and $x=\sigma_{D_{2a}}^{\varphi^\omega}(x) = (\sigma_{D_{2a}}^\varphi(x_n))^\omega$. Since the support of $\widehat{D_{2a}}$ is contained in $[-4a,4a]$, we have $\sigma_{D_{2a}}^\varphi(x_n)\in M(\sigma^{\varphi},[-4a,4a])$ for every $n \in \N$. 
\end{proof}

\subsection*{Kirchberg's quotient weak expectation property (QWEP)}

Let $A \subset B$ be an inclusion of C$^*$-algebras. Following \cite{Ki92, Oz03}, we say that $A$ is {\em cp complemented} (resp.\ {\em weakly cp complemented}) in $B$ if there exists a ucp map $\Phi : B \to A$ (resp.\ $\Phi : B \to A^{\ast \ast}$) such that $\Phi |_{A} = \id_A$.

\begin{df}[{\cite{Ki92}}]
We say that a C$^*$-algebra $A$ has the {\em weak expectation property} (WEP) if it is weakly cp complemented in $\mathbf B(H)$ for some (or any) faithful representation $A \subset \mathbf B(H)$. We say that a C$^*$-algebra $A$ has the {\em quotient weak expectation property} (QWEP) if $A$ is the quotient of a C$^*$-algebra with the WEP. 
\end{df}

The QWEP conjecture states that all C$^*$-algebras have the QWEP. We refer the reader to \cite{Ki92, Oz03} for more on the QWEP conjecture.

\subsection*{Popa's local quantization principle}
In this Subsection, we review Popa's {\em local quantization principle} \cite{Po92, Po95a} that will play a crucial role in the proof of Lemma \ref{technical-lemma1}.

\begin{lem}\label{local-quantization-principle-lemma}
Let $(M, \varphi)$ be any $\sigma$-finite von Neumann algebra endowed with a faithful normal state and $Q \subset M^\varphi$ any von Neumann subalgebra. Let $X\subset M$ be any finite subset such that $\rE_{Q\vee (Q'\cap M)}(X)=0$ and $\{q_j\}_{j \in J}$ any finite partition of unity with projections $q_j \in Q$. 

Then for every $\delta>0$, there exists a finite partition of unity $\{p_i\}_{i \in I}$ with projections $p_i \in Q$ that refines the partition $\{q_j\}_{j \in J}$ and such that $\|\sum_{i \in I} p_ixp_i\|_\varphi<\delta$ for all $x\in X$. 
\end{lem}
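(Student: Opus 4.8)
The plan is to reduce the statement to a single element and to the weaker condition $\rE_{Q'\cap M}(x)=0$, and then to run Popa's incremental patching inside $Q$. First I would record three elementary reductions. Since $Q\subset M^\varphi$ is pointwise fixed by the modular group, $Q'\cap M$ is globally invariant under $(\sigma_t^\varphi)$, so the expectation $\rE_{Q'\cap M}$ exists and the tower $\rE_{Q'\cap M}=\rE_{Q'\cap M}\circ\rE_{Q\vee(Q'\cap M)}$ shows that the hypothesis $\rE_{Q\vee(Q'\cap M)}(X)=0$ already forces $\rE_{Q'\cap M}(x)=0$ for all $x\in X$; this weaker condition is all I will use. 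As $X$ is finite, it suffices to control $\sum_{x\in X}\|\sum_i p_ixp_i\|_\varphi^2$, so conceptually I may assume $X=\{x\}$. Finally, because each $p_i\in Q\subset M^\varphi$ commutes with the modular group, distinct corners are orthogonal in $\rL^2(M,\varphi)$, whence
$$\Big\|\sum_i p_ixp_i\Big\|_\varphi^2=\sum_i\varphi(x^*p_ixp_i),$$
and refining a partition is a pinching inside each corner, hence can only decrease this quantity.

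The core is a splitting step. Fix a partition $\{p_i\}$ refining $\{q_j\}$. The obstruction to decreasing the corner term $\varphi(x^*p_ixp_i)$ by splitting $p_i$ further inside $Q$ is precisely that $p_ixp_i$ already commutes with $Qp_i$, that is, $p_ixp_i\in p_i(Q'\cap M)p_i$; as long as this fails for some $i$ there is a projection $e\le p_i$ in $Q$ whose insertion strictly lowers the corner term by a controlled amount. Here the centralizer hypothesis $Q\subset M^\varphi$ is what makes the pinching $\varphi$-preserving and keeps every estimate inside the Hilbert space $\rL^2(M,\varphi)$, so that Popa's local quantization estimate from \cite{Po92, Po95a} carries over. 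The condition $\rE_{Q'\cap M}(x)=0$ enters to prevent the process from stalling at a positive value: it guarantees that the ``diagonal'' contribution coming from corners on which $p_ixp_i\in p_i(Q'\cap M)p_i$ carries no $\varphi$-mass in the limit.

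To globalize I would use a maximality argument in the spirit of \cite[Lemma 1.2]{Po95b}. Consider families of mutually orthogonal nonzero projections in $Q$, each dominated by some $q_j$, on whose corners the averaged contribution has been driven below the prescribed density; order them by inclusion and extract a maximal one by Zorn's lemma. If the supremum of such a maximal family were not $1$, then the splitting step applied to a subprojection of its complement would contradict maximality; hence the family is a partition of unity $\{p_i\}_{i\in I}$ refining $\{q_j\}_{j\in J}$ with $\sum_i\varphi(x^*p_ixp_i)<\delta^2$. Running this once for the single element built from $\sum_{x\in X}(\,\cdot\,)$ produces one partition that works for all $x\in X$ simultaneously, which is the claim.

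I expect the main obstacle to be the quantitative form of the splitting step: turning the qualitative fact that $p_ixp_i\notin p_i(Q'\cap M)p_i$ permits a strict decrease into a uniform numerical gain that can be summed and controlled across the (a priori infinite) Zorn family, while respecting that $Q$ need not be abelian, so that two partitions of unity into projections of $Q$ need not admit a common projection refinement. This non-directedness is exactly why Popa's incremental patching, rather than a naive monotone limit of pinchings along the net of all refinements, is required.
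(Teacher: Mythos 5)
Your first reduction is fatal: the lemma is \emph{false} under the weaker hypothesis $\rE_{Q'\cap M}(X)=0$, so the announcement that ``this weaker condition is all I will use'' cannot be repaired by anything done in the later steps. Concretely, let $N$ be any von Neumann algebra with a faithful normal state $\psi$, let $M=M_2(\C)\ovt N$ with $\varphi=\mathrm{tr}_2\otimes\psi$, and let $Q=M_2(\C)\otimes\C1$, so that $Q\subset M^\varphi$, $Q'\cap M=\C1\otimes N$ and $\rE_{Q'\cap M}=\mathrm{tr}_2\otimes\id_N$. Take $\{q_j\}=\{1\}$ and $x=\sum_{i=1}^3 a_i\otimes b_i$, where $a_1,a_2,a_3$ are the Pauli matrices and $b_1,b_2,b_3\in N$ are linearly independent; then $\rE_{Q'\cap M}(x)=\sum_i\mathrm{tr}_2(a_i)\,b_i=0$. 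A partition of unity with projections in $Q$ is either $\{1\}$ or a pair $\{p,p^\perp\}$ with $p$ of rank one; writing $p=\frac12\bigl(1+\sum_i n_ia_i\bigr)$ with $(n_1,n_2,n_3)$ a unit vector of $\R^3$ and using $pap=\mathrm{Tr}(pa)\,p$ for rank-one $p$, one computes
$$pxp+p^\perp xp^\perp=(p-p^\perp)\otimes\Bigl(\sum_{i=1}^3 n_ib_i\Bigr),\qquad \bigl\|pxp+p^\perp xp^\perp\bigr\|_\varphi=\Bigl\|\sum_{i=1}^3 n_ib_i\Bigr\|_\psi.$$
By linear independence and compactness of the sphere this is bounded below by a strictly positive constant uniformly over all rank-one $p$, and the trivial partition gives $\|x\|_\varphi>0$; so no admissible pinching of $x$ is small, although $\rE_{Q'\cap M}(x)=0$. (This does not contradict the lemma itself: here $Q\vee(Q'\cap M)=M$, so the actual hypothesis forces $x=0$.)

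The same example shows that your splitting step misidentifies the mechanism. You claim that $\rE_{Q'\cap M}(x)=0$ prevents the process from ``stalling at a positive value''; above, the process stalls immediately---$pxp=p\otimes(\sum_in_ib_i)$ commutes with $pQp=\C p$ for every rank-one $p$---at a value bounded away from $0$. What actually prevents stalling is the full hypothesis, used as follows: $\rE_{Q\vee(Q'\cap M)}$ is a $Q$-bimodule map (since $Q\subset Q\vee(Q'\cap M)$), so the condition $\rE_{Q\vee(Q'\cap M)}(\,\cdot\,)=0$ is inherited by every pinching $\sum_ip_ixp_i$; and a fully stalled element, i.e.\ one of the form $\sum_ip_iy_ip_i$ with $y_i\in Q'\cap M$ and $p_i \in Q$, lies in $Q\vee(Q'\cap M)$, where that condition forces it to vanish. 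The weak condition kills no such element in general, which is exactly what the example exploits, so no patching scheme driven only by $\rE_{Q'\cap M}(x)=0$ can converge to $0$. (A smaller inaccuracy: your stalling criterion $p_ixp_i\in p_i(Q'\cap M)p_i$ tacitly identifies $(p_iQp_i)'\cap p_iMp_i$ with $p_i(Q'\cap M)p_i$, which fails for a general subalgebra $Q$.) Note finally that the paper does not reprove this lemma at all: it simply refers to the proofs of \cite[Lemma A.1.1]{Po92} and \cite[Lemma A.1.1]{Po95a}, which carry over verbatim because all projections involved lie in the centralizer $M^\varphi$---so pinchings are $\varphi$-preserving and distinct corners are $\|\cdot\|_\varphi$-orthogonal, exactly as you observe---but with the hypothesis $\rE_{Q\vee(Q'\cap M)}(X)=0$ kept in full.
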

\begin{proof}
See the proofs of \cite[Lemma A.1.1]{Po92} and \cite[Lemma A.1.1]{Po95a}.
\end{proof}

\begin{prop}\label{local-quantization-principle}
Let $(M, \varphi)$ be any $\sigma$-finite von Neumann algebra endowed with a faithful normal state and $Q \subset M^\varphi$ any type ${\rm II_1}$ subfactor. Let $\varepsilon > 0$ and $X \subset M$ be any finite subset. Then there exists a nonzero projection $q \in Q$ such that 
$$\|q x q - \rE_{Q'\cap M}(x) q\|_\varphi < \varepsilon \|q\|_\varphi, \forall x \in X.$$
\end{prop}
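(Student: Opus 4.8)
The plan is to derive the Proposition from Lemma \ref{local-quantization-principle-lemma}, the factoriality of $Q$, and a pigeonhole argument. Write $B = Q' \cap M$ and $N = Q \vee B$. Since $Q \subset M^\varphi$ is a factor, both $Q$ and $B$ are globally invariant under $(\sigma_t^\varphi)$, hence so is $N$, and the identification $N \cong Q \ovt B$ holds with $\rE_B|_N = \tau \otimes \id$, where $\tau = \varphi|_Q$ is the faithful normal trace that $Q$ inherits from $Q \subset M^\varphi$. First I would reduce to the case $\rE_B(x) = 0$: as $\rE_B(x) \in B$ commutes with every $q \in Q$, one has $qxq - \rE_B(x)q = q(x - \rE_B(x))q$, so it suffices to find a nonzero $q \in Q$ with $\|qyq\|_\varphi < \varepsilon \|q\|_\varphi$ for all $y$ in the finite set $Y = \{x - \rE_B(x) : x \in X\}$, each of which satisfies $\rE_B(y) = 0$.

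Next I would pass from a partition of unity to a single projection by pigeonhole. Suppose that for a given $\delta > 0$ we can produce a finite partition of unity $\{p_i\}_{i \in I}$ into projections of $Q$ with $\|\sum_i p_i y p_i\|_\varphi < \delta$ for every $y \in Y$. Because the $p_i$ are pairwise orthogonal, the cross terms vanish and $\|\sum_i p_i y p_i\|_\varphi^2 = \sum_i \|p_i y p_i\|_\varphi^2$, while $\sum_i \|p_i\|_\varphi^2 = \varphi(1) = 1$. Comparing $\sum_i \sum_{y \in Y} \|p_i y p_i\|_\varphi^2 < |Y| \delta^2$ with $\sum_i \|p_i\|_\varphi^2 = 1$, the pigeonhole principle produces an index $i$ with $\|p_i y p_i\|_\varphi < \varepsilon \|p_i\|_\varphi$ for all $y \in Y$ at once, as soon as $\delta < \varepsilon / \sqrt{|Y|}$; the resulting (automatically nonzero) projection $q = p_i$ is then as required.

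It remains to construct such a partition, and here I would split each $y \in Y$ as $y = (y - \rE_N(y)) + \rE_N(y)$. The first summand satisfies $\rE_N(y - \rE_N(y)) = 0$, so these finitely many elements are handled directly by Lemma \ref{local-quantization-principle-lemma}, which, refining any prescribed partition, produces projections $\{p_i\} \subset Q$ making $\|\sum_i p_i (y - \rE_N(y)) p_i\|_\varphi$ arbitrarily small. The second summand $z_y := \rE_N(y) \in N \cong Q \ovt B$ satisfies $\rE_B(z_y) = \rE_B(y) = 0$, i.e.\ $(\tau \otimes \id)(z_y) = 0$; approximating $z_y$ in $\|\cdot\|_\varphi$ by finite sums $\sum_k a_k \otimes b_k$ with $a_k \in Q$, $\tau(a_k) = 0$, the problem becomes that of making $\sum_i p_i a_k p_i$ small for all $k$. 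This is precisely the local quantization principle inside the ${\rm II_1}$ factor $Q$ (the case of trivial relative commutant), which holds by factoriality of $Q$ and supplies a partition $\{q_j\} \subset Q$ with $\|\sum_j q_j z_y q_j\|_\varphi$ small for every $y$. Feeding this $\{q_j\}$ into Lemma \ref{local-quantization-principle-lemma} and refining it to $\{p_i\}$ settles both summands simultaneously: refinement does not spoil the estimate for the $z_y$, because $v \mapsto \sum_i p_i v p_i$ is the $\varphi$-preserving conditional expectation onto $\{p_i\}' \cap M$ (here one uses $\{p_i\} \subset M^\varphi$), hence a $\|\cdot\|_\varphi$-contraction, and for a refinement it factors through $v \mapsto \sum_j q_j v q_j$, so $\|\sum_i p_i z_y p_i\|_\varphi \le \|\sum_j q_j z_y q_j\|_\varphi$.

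The main obstacle is the nontrivial relative commutant $B = Q' \cap M$. Lemma \ref{local-quantization-principle-lemma} only controls the component of $y$ orthogonal to $N$, whereas the component $\rE_N(y)$ lives in the diagonal $Q \ovt B$ and its $Q$-part genuinely must be quantized using the ${\rm II_1}$-factoriality of $Q$; this cannot be avoided, since for a generic projection $q$ the compression $qaq$ of a trace-zero $a \in Q$ need not be small relative to $\|q\|_\varphi$. The second delicate point is the compatibility of the two quantizations, which I would ensure by quantizing the $N$-part first and only then refining, relying on the contractivity of the pinching maps. The absence of a trace on $M$ causes no difficulty, as all the projections involved lie in $M^\varphi$, where $\varphi$ is tracial, so the orthogonality and contractivity estimates go through unchanged.
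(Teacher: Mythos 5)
Your proposal is correct and takes essentially the same route as the paper's own proof: both decompose each element into its part orthogonal to $N = Q \vee (Q' \cap M)$ (controlled by Lemma \ref{local-quantization-principle-lemma}) and its part inside $N$ (controlled by Popa's local quantization inside the ${\rm II_1}$ factor $Q$, after approximating by finite sums $\sum_k a_k b_k$ with $a_k \in Q$, $b_k \in Q' \cap M$), both quantize the $N$-part first and then refine via the Lemma, using contractivity of the pinching maps $v \mapsto \sum_i p_i v p_i$, and both conclude with the identical pigeonhole argument. Your explicit identification $N \cong Q \ovt (Q' \cap M)$ (valid here thanks to the $\varphi$-preserving expectation onto the factor $Q$, which makes $\varphi|_N$ a product state) and your normalization $\tau(a_k) = 0$ are only cosmetic variants of the paper's estimate $\|\rE_{A_2' \cap Q}(b) - \varphi(b)1\|_\varphi < \varepsilon_2$.
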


\begin{proof}
Since the proof is the same as the one of \cite[Theorem A.1.2]{Po92}, we will only sketch it. For every $x \in X$, write $x' = \rE_{Q\vee(Q'\cap M)}(x)$ and $x\dpr =x - x'$. Put $Y = \{x\dpr: x \in X\}$. Note that given any $\varepsilon_1 > 0$, every $x'$ can be approximated using finitely many $b\in Q$ and $b'\in Q'\cap M$ such that $\|x'-\sum bb' \|_\varphi<\varepsilon_1$. We denote by $\mathcal{S}\subset Q$ the set of all such $b \in Q$.  Since all the arguments are done inside the type ${\rm II_1}$ subfactor $Q$, proceeding exactly as in the proof of \cite[Theorem A.1.2, pp.\ 246-247]{Po92}, for every $\varepsilon_2>0$, one can find a finite dimensional abelian unital $\ast$-subalgebra $A_2\subset Q$ such that $\|\rE_{A_2'\cap Q}(b)-\varphi(b)1 \|_\varphi<\varepsilon_2$ for all $b\in \mathcal{S}$.

Since $A_2$ is obtained from a partition of unity with projections in $Q$ and since we have $\rE_{Q\vee(Q'\cap M)}(x\dpr) = 0$, Lemma \ref{local-quantization-principle-lemma} implies that for every $\varepsilon_3 > 0$, we can find a refinement $\{q_i\}_{i \in I}\subset Q$ of the partition coming from $A_2$ such that with $A_3=\sum_{i \in I}\C q_i$, we have $\|\rE_{A_3'\cap M}(x\dpr)\|_\varphi<\varepsilon_3$ for all $x\dpr \in Y$. Observe that we have $\rE_{A_3' \cap M}(x) = \sum_{i \in I} q_i x q_i$ and $Q' \cap M \subset A_3' \cap M \subset A_2' \cap M$. Then we have 
\begin{align*}
\|\rE_{A_3'\cap M}(x)-\rE_{Q'\cap M}(x)\|_\varphi
&\leq\|\rE_{A_3'\cap M}(x')-\rE_{Q'\cap M}(x')\|_\varphi+\|\rE_{A_3'\cap M}(x\dpr)-\rE_{Q'\cap M}(x\dpr)\|_\varphi\\
&=\|\rE_{A_3'\cap M}(x'-\rE_{Q'\cap M}(x'))\|_\varphi+\|\rE_{A_3'\cap M}(x\dpr)\|_\varphi\\
&\leq\|\rE_{A_2'\cap M}(x')-\rE_{Q'\cap M}(x')\|_\varphi+\|\rE_{A_3'\cap M}(x\dpr)\|_\varphi\\
&< \left\|\sum \rE_{A_2'\cap M}(bb')-\rE_{Q'\cap M}(bb') \right\|_\varphi+2\varepsilon_1+\varepsilon_3\\
&\leq\sum \|b'\|_\infty \|\rE_{A_2'\cap M}(b)-\rE_{Q'\cap M}(b)\|_\varphi+2\varepsilon_1+\varepsilon_3\\
&= \sum \|b'\|_\infty \|\rE_{A_2'\cap Q}(b)-\varphi(b)1\|_\varphi+2\varepsilon_1+\varepsilon_3\\
&<\sum \|b'\|_\infty \, \varepsilon_2+2\varepsilon_1+\varepsilon_3.
\end{align*}

So, by choosing $\varepsilon_1, \varepsilon_2, \varepsilon_3$ sufficiently small so that $\sum \|b'\|_\infty \, \varepsilon_2+2\varepsilon_1+\varepsilon_3 < \frac{\varepsilon}{\sqrt{|X|}}$, we get
\begin{align*}
\sum_{i \in I} \|q_i(x-\rE_{Q'\cap M}(x))q_i\|_\varphi^2
&= \left\|\sum_{i \in I} q_i(x-\rE_{Q'\cap M}(x))q_i \right\|_\varphi^2 \\
&= \|\rE_{A_3'\cap M}(x)-\rE_{Q'\cap M}(x)\|_\varphi^2 \\
&< \sum_{i \in I} \frac{\varepsilon^2}{|X|}\|q_i\|_\varphi^2
\end{align*}
and hence
$$\sum_{i \in I}\left(\sum_{x\in X}\|q_i(x-\rE_{Q'\cap M}(x))q_i\|_\varphi^2 \right)< \sum_{i \in I} \varepsilon^2\|q_i\|_\varphi^2.$$
Thus, there exists $i \in I$ such that $\sum_{x\in X}\|q_i(x-\rE_{Q'\cap M}(x))q_i\|_\varphi^2< \varepsilon^2\|q_i\|_\varphi^2$.
\end{proof}

\section{Popa's incremental patching method in arbitrary von Neumann algebras}\label{incremental}

\begin{nota}
Let $B$ be any von Neumann algebra, $v \in B$ any partial isometry satisfying $v^*v = vv^*$ and $X \subset B$ any subset. For $k \geq 1$, put 
$$X_v^k = \left\{x_0 \prod_{i = 1}^k v_i x_i : v_i = v \text{ or } v_i = v^*, x_0, x_k \in X \cup \{1\} \text{ and } x_1, \dots , x_{k - 1} \in X\right\}.$$
\end{nota}

For all $a > 0$, put $C(a) = C(a, {\rm i}/2) = C(a, - {\rm i}/2) > 1$ where $(0, +\infty) \times \C \to (0, +\infty) : (a, z) \mapsto C(a, z) := 2 \exp(2a |\Im(z)|) + \exp(a |\Im(z)|)$ is the function that appeared in Lemma \ref{analytic}. In the rest of this Section, we assume that $(M, \varphi)$ is a non-type ${\rm I}$ $\sigma$-finite factor endowed with a faithful normal state and $Q \subset M^\varphi$ is any von Neumann subalgebra satisfying $Q' \cap M = \C 1$. Observe that $Q$ is necessarily a type ${\rm II_1}$ subfactor. We also fix a non-principal ultrafilter $\omega \in \beta(\N) \setminus \N$.

In Lemma \ref{technical-lemma1}, we start by proving the existence of nonzero partial isometries in $Q$ with good approximate independence properties in $M$ with respect to the state $\varphi$ for words of bounded length and with letters in bounded spectral subspaces of $(\sigma_t^\varphi)$. This is an analogue of \cite[Lemma 1.2]{Po95b}.

\begin{lem}\label{technical-lemma1}
Let $\varepsilon > 0$, $a > 0$, $n \geq 1$, $X \subset M(\sigma^\varphi, [-a, a])$ and $Y\subset M$ be any finite subsets such that $X = X^*$ and $Y = Y^*$. Let $f\in Q$ be any nonzero projection and assume that $\varphi(fXf)=0$. 

Then there exists a partial isometry $v \in fQf$ satisfying $v^*v = vv^*$, $\varphi(v^*v) > \frac{\varphi(f)}{12 \, C(a)^2+1}$ and
$$\max\{|\varphi(fxf) |,|\varphi(x) |\}\leq \varepsilon, \forall x \in \bigcup_{k = 1}^n X_v^k \quad \text{and} \quad |\varphi(yvz)|\leq\varepsilon, \forall y,z\in Y.$$
\end{lem}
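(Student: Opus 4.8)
The plan is to follow Popa's incremental patching scheme and to obtain $v$ from a maximality argument. I would first introduce the collection $\mathcal V$ of all partial isometries $v \in fQf$ satisfying $v^*v = vv^*$ together with the two families of estimates from the conclusion, namely $\max\{|\varphi(fxf)|,|\varphi(x)|\} \le \varepsilon$ for every $x \in \bigcup_{k=1}^n X_v^k$ and $|\varphi(yvz)| \le \varepsilon$ for all $y,z \in Y$ (the zero partial isometry shows $\mathcal V \neq \emptyset$). Ordering $\mathcal V$ by declaring $v \preceq v'$ when $v'$ extends $v$, that is, when $v^*v \le (v')^*v'$ and $v'(v^*v) = v$, I would check that these estimates are preserved under suprema of increasing chains: each word of length at most $n$ involves only finitely many letters, the net of such words converges $\ast$-strongly, and $\varphi$ is normal, so the closed conditions $|\varphi(\cdot)| \le \varepsilon$ persist in the limit. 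Zorn's lemma then produces a maximal element $v$ with support $p := v^*v = vv^*$. It suffices to show $\varphi(p) > \frac{\varphi(f)}{12\,C(a)^2+1}$, which I would establish by contradiction: if $\varphi(p)$ lay below this threshold, then $\varphi(f-p) \ge \varphi(f)\,\frac{12\,C(a)^2}{12\,C(a)^2+1}$ would be large enough to enlarge $v$ inside $\mathcal V$.

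The enlargement is where Popa's local quantization principle enters. Applying Proposition \ref{local-quantization-principle} to the type ${\rm II_1}$ subfactor $Q$ — and using $Q'\cap M = \C 1$, so that $\rE_{Q'\cap M}(z) = \varphi(z)\,1$ — I would produce, for a sufficiently small tolerance, a nonzero projection $q \in Q$ that I may arrange to satisfy $q \le f-p$ and for which $\|qzq - \varphi(z)\,q\|_\varphi$ is as small as desired relative to $\|q\|_\varphi$, simultaneously for every $z$ in the finite set comprising $X$, $Y$, and all the sub-words occurring in $\bigcup_{k=1}^n X_v^k$. Using that $Q$ is a ${\rm II_1}$ factor, I would then subdivide $q$ into equivalent orthogonal subprojections and let $w_0 \in qQq$ be the associated cyclic shift, a balanced partial isometry with $w_0^*w_0 = w_0 w_0^* = q$ and $\varphi(w_0)=0$. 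Setting $v' = v + w_0$, the orthogonality $q \perp p$ gives $(v')^*v' = (v')(v')^* = p+q$, so $v'$ strictly extends $v$; every newly created word in $\bigcup_{k=1}^n X_{v'}^k$ either already lies in the old families or bridges the $p$- and $q$-strands through $X$-letters, and in the latter case the approximate-scalar property of $q$, together with the seeding hypothesis $\varphi(fXf)=0$, keeps its state value within $\varepsilon$. Hence $v' \in \mathcal V$, contradicting maximality.

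The principal obstacle, and the source of the analytic subtleties, is that $\varphi$ is \emph{not} a trace. Since $v \in fQf \subset M^\varphi$ lies in the centralizer whereas the letters $x \in X \subset M(\sigma^\varphi,[-a,a])$ do not, evaluating the state of an alternating word $x_0 v^{\pm 1} x_1 \cdots v^{\pm 1} x_k$ forces one to commute the $v$'s past the $x_i$'s, which in the absence of a trace is only possible up to the modular correction dictated by the KMS condition. Concretely, I would push the modular automorphism $\sigma^\varphi_{\pm {\rm i}/2}$ onto the analytic letters whenever a factor $v$ or $v^*$ is cycled, and invoke Proposition \ref{analytic} to bound $\|\sigma^\varphi_{\pm {\rm i}/2}(x)\|_\infty \le C(a)\,\|x\|_\infty$; because both $v$ and $v^*$ occur and letters sit on both sides, this yields the square $C(a)^2$, while the numerical factor $12$ and the $+1$ emerge from counting the resulting terms and from comparing $\varphi(q)$ with $\varphi(f-p)$ in the enlargement. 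Carrying out these modular manipulations uniformly over all words of length at most $n$ with letters from the finite set $X$ — so that a single tolerance in Proposition \ref{local-quantization-principle} suffices for the whole family — is the step I expect to demand the most care, and it is precisely here that the analytic techniques of \cite[Section~4]{AH12} are indispensable.
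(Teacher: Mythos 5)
There is a genuine gap, and it sits exactly where the incremental patching has to work: in the definition of your inductive set $\mathcal V$. You require the \emph{fixed} bounds $|\varphi(x)|\le\varepsilon$, $|\varphi(fxf)|\le\varepsilon$, $|\varphi(yvz)|\le\varepsilon$ as membership conditions. With these conditions the contradiction step fails: if $v$ is maximal and you set $v'=v+w_0$, every word in $X_{v'}^k$ decomposes as the old all-$v$ word plus cross terms containing $w_0$, so the best you can conclude is $|\varphi(x)|\le|\varphi(x_{\mathrm{old}})|+(\text{cross terms})\le\varepsilon+\delta$. Since the maximal $v$ may already saturate the bound $\varepsilon$ on some word, no choice of $\delta>0$ puts $v'$ back into $\mathcal V$, and maximality is never contradicted. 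The paper's proof avoids this by making every bound \emph{proportional to the support}: membership in its set $\mathcal W$ requires $\max\{|\varphi(fxf)|,|\varphi(x)|\}\le\varepsilon_k\,\varphi(v^*v)$ and $|\varphi(yvz)|\le\varepsilon\,\varphi(v^*v)$. Then the budget grows when the partial isometry grows: old words cost at most $\varepsilon_k\varphi(v^*v)$, the new terms are engineered to cost at most $\varepsilon_k\varphi(w^*w)$, and the sum is exactly the new budget $\varepsilon_k\varphi(u^*u)$. The conclusion of the lemma is then recovered at the end because $\varepsilon_k\varphi(v^*v)\le\varepsilon_n<\varepsilon$. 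This proportionality is not a cosmetic variation; it is the mechanism that makes Popa's patching possible, and your version omits it.

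Two further points would also need repair. First, the paper does not use a single tolerance for all word lengths but a geometric ladder $\varepsilon_k=2^kC(a)\,\varepsilon_{k-1}$: when the local quantization principle reduces a cross term to the scalar $\varphi(pz_1p)$ for a subword $z_1\in X\cup X_v^m$ (with $vz_1v^*\in X_v^{m+2}$), the available estimate is at the level $\varepsilon_{m+2}\varphi(v^*v)$, and one needs $\varepsilon_{m+2}\le C(a)^{-k+m+2}2^{-k}\varepsilon_k$ to absorb the combinatorial count $2^k$ of cross terms and the modular constants $C(a)^{k+i_1-i_2+1}$; a single $\varepsilon$ cannot close this bookkeeping. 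Second, your fixed ``cyclic shift'' $w_0$ with $\varphi(w_0)=0$ only controls the zeroth Fourier mode; the length-one cross terms $\varphi(z_0wz_1)$ and $\varphi(ywz)$ are not controlled by $\varphi(w_0)=0$ and the approximate-scalar property of $q$ alone (one cannot cycle inside $\varphi$, as it is not a trace). In the paper, the unitary $w\in\mathcal U(qQq)$ is chosen \emph{after} $q$, depending on the finitely many subwords $z_0^i,z_1^i$ and on $Y$, using diffuseness of $qQq$ (e.g.\ high powers of a Haar unitary, which tend to $0$ weakly). Your overall architecture (Zorn, local quantization with $\rE_{Q'\cap M}=\varphi(\cdot)1$, modular corrections via $\sigma_{\pm\mathrm{i}/2}^\varphi$ and Proposition \ref{analytic}) matches the paper, but as written the argument does not go through.
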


\begin{proof}
The proof follows very closely the one of \cite[Lemma 1.2]{Po95b}. We will nevertheless give a detailed proof for the reader's convenience. We may assume that $X\subset \Ball(M)$. We fix $\varepsilon_0 > 0$ and define inductively $\varepsilon_k$ for all $1 \leq k \leq n$ by the formula $\varepsilon_k= 2^k C(a)\varepsilon_{k-1}$. We moreover choose $\varepsilon_0 > 0$ sufficiently small so that $\varepsilon_n<\varepsilon$. 

We next define $\mathcal{W}$ as the set of all partial isometries $v$ in $fQf$ such that 
\begin{itemize}
	\item $v^*v=vv^*$,
	\item $\max\{|\varphi(fxf)|,|\varphi(x)|\}\leq\varepsilon_k\varphi(v^*v)$ for all $1 \leq k \leq n$ and all $x\in X_v^k$ and
	\item $|\varphi(yvz)|\leq\varepsilon\varphi(v^*v)$ for all $y,z\in Y$.
\end{itemize}
Note that $\mathcal{W}$ is not empty since $0 \in \mathcal W$. We define a partial order on $\mathcal{W}$ by $w_1\leq w_2$ $\Leftrightarrow$ $w_1=w_2w_1^*w_1$. Then $(\mathcal{W}, \leq)$ is an inductive set. By Zorn's lemma, take a maximal element $v\in\mathcal{W}$ and put $p=f-v^*v$. We will show that $\varphi(v^*v)> \varphi(f)/(12 \, C(a)^2+1)$.

Suppose by contradiction that $\varphi(v^*v)\leq \varphi(f)/(12 \, C(a)^2+1)$ and note that this implies that $\varphi(v^*v)/\varphi(p)\leq 1/(12 \, C(a)^2)$. Our goal is to find a non-zero partial isometry $w\in pQp$ such that $v+w\in \mathcal{W}$. This in turn will contradict the maximality of $v \in \mathcal W$. To do so, we first fix a nonzero zero projection $q \in pQp$ and take any unitary $w\in qQq$. Put $u=v+w$.

Let $1 \leq k \leq n$. Take $x=x_0 \prod_{i = 1}^k u_i x_i\in X_u^k$ and decompose it using $u_i=v_i+w_i$ as follows:
$$x=x_0 \prod_{i = 1}^k v_i x_i + \sum_{\ell=1}^k\sum_{i\in I_\ell}z_0^i \prod_{j = 1}^\ell w_j^i z_j^i,$$
where $I_\ell=\{(i_1,\ldots,i_\ell) : 1\leq i_1<\cdots <i_\ell\leq k \}$, $w_j^i=w_{i_j}$, $z_0^i=x_0v_1x_1\cdots x_{i_1-1}$, $z_j^i=x_{i_j}v_{i_j+1}x_{i_j+1}\cdots v_{i_{j+1}}x_{i_{j+1}-1}$ for $1\leq j<\ell$ and $z_\ell^i=x_{i_\ell}v_{i_\ell+1}\cdots v_kx_k$. By applying $\varphi$ and using the triangle inequality, we obtain 
\begin{align*}
|\varphi(x)|&\leq
\left|\varphi\left(x_0 \prod_{i = 1}^k v_i x_i \right) \right| + \sum_{\ell=1}^k\sum_{i\in I_\ell} \left|\varphi\left(z_0^i \prod_{j = 1}^\ell w_j^i z_j^i\right)\right| \\
|\varphi(fxf)|&\leq
\left|\varphi\left(f\, x_0 \prod_{i = 1}^k v_i x_i \, f\right)\right| + \sum_{\ell=1}^k\sum_{i\in I_\ell} \left|\varphi\left(f \, z_0^i \prod_{j = 1}^\ell w_j^i z_j^i \,f\right)\right|.
\end{align*}
The first terms on the right hand side are less than or equal to $\varepsilon_k\varphi(v^*v)$ since $v \in \mathcal W$. So our task is to find an appropriate projection $q \in pQp$ and then an appropriate $w \in \mathcal U(q Q q)$ such that the second terms on the right hand side are less than or equal to $\varepsilon_k\varphi(w^*w)$. From that, we will obtain that $\max\{|\varphi(fxf)|,|\varphi(x)|\}\leq  \varepsilon_k\varphi(v^*v) +  \varepsilon_k\varphi(w^*w) = \varepsilon_k\varphi(u^*u)$.

\begin{claim}
There is a nonzero projection $q\in pQp$ such that
$$\sum_{\ell=2}^k\sum_{i\in I_\ell}\max\left\{\left|\varphi\left(f\, z_0^i \prod_{j = 1}^\ell w_j^i z_j^i \, f\right)\right|, \left|\varphi \left(z_0^i \prod_{j = 1}^\ell w_j^i z_j^i \right) \right| \right\}<\frac{\varepsilon_k}{2}\varphi(q)$$
for all $2 \leq k\leq n$, all $w_j^i \in \{w, w^*\}$ for any unitary $w\in \mathcal U(qQq)$ and all $z_j^i$ that are constructed from any $x_j\in X$ and $v_j$ as above.
\end{claim}

\begin{proof}[Proof of the Claim]
This is the main part of the proof of Lemma \ref{technical-lemma1}. We will make use of Popa's local quantization principle (see Proposition \ref{local-quantization-principle}). Recall that $\|yb\|_\varphi\leq \|\sigma_{-{\rm i}/2}^\varphi(b^*)\|_\infty\|y\|_\varphi$ for all $y\in M$ and all analytic $b \in M$ (see \cite[Lemma VIII 3.10 (i)]{Ta03}). Moreover, Proposition \ref{analytic} implies that $\|\sigma_{-{\rm i}/2}^\varphi(b^*)\|_\infty \leq C(a) \|b\|_\infty$ for all $b\in M(\sigma^\varphi, [-a, a])$. 

For all $\ell\geq2$ and all $i = (i_1, \dots, i_\ell)\in I_\ell$, we have
\begin{align*}
\|(fz_0^iq)^*\|_\varphi\leq\|(z_0^iq)^*\|_\varphi
&= \|q x_{i_1-1}^*\cdots x_1^*v_1^*x_0^*\|_\varphi\\
&\leq \|\sigma_{-{\rm i}/2}^\varphi(x_0)\|_\infty \|v_1\|_\infty \cdots \|\sigma_{-{\rm i}/2}^\varphi(x_{i_1 - 1})\|_\infty \|q\|_\varphi\\
&\leq C(a)^{i_1} \|q\|_\varphi 
\end{align*}
and
\begin{align*}
\left\|\prod_{j = 1}^\ell w_j^i z_j^i \, f\right\|_\varphi\leq \left\|\prod_{j = 1}^\ell w_j^i z_j^i \right\|_\varphi
&=\|w^i_1\, qz^i_1q \, w_2^iz_2^i\cdots w_\ell^iz_\ell^i\|_\varphi\\
&\leq \|\sigma_{-{\rm i}/2}^\varphi(z_\ell^{i*})\|_\infty \|w_\ell^{i*}\|_\infty \cdots \|\sigma_{-{\rm i}/2}^\varphi(z_2^{i*})\|_\infty \|w_2^{i*}\|_\infty \|w^i_1\|_\infty  \|qz^i_1q\|_\varphi\\
&\leq C(a)^{k-i_2+1} \|qz^i_1q\|_\varphi.
\end{align*}
Then by the Cauchy-Schwarz inequality,  we obtain 
\begin{align*}
\max\left\{\left|\varphi \left(f \, z_0^i\prod_{j = 1}^\ell w_j^i z_j^i \, f\right)\right|, \left|\varphi \left(z_0^i\prod_{j = 1}^\ell w_j^i z_j^i \right)\right| \right\}&\leq \|(z_0^iq)^*\|_\varphi \left\|\prod_{j = 1}^\ell w_j^i z_j^i\right\|_\varphi \\
&\leq C(a)^{k+i_1-i_2+1} \|q\|_\varphi \|qz^i_1q\|_\varphi.
\end{align*}
We now apply Proposition \ref{local-quantization-principle} to the inclusion $pQp\subset pMp$, for some $\alpha>0$ and for all possible $pz^i_1p$ and we obtain a nonzero projection $q\in pQp$ satisfying
$$\|qz_1^iq - \varphi_p(pz_1^ip)q\|_\varphi<\alpha \|q\|_\varphi \quad \text{for all possible } pz^i_1p$$ 
where $\varphi_p= \frac{\varphi(p \cdot p)}{\varphi(p)}$. By choosing $\alpha > 0$ sufficiently small, we have 
$$\max\left\{\left|\varphi \left(f \, z_0^i\prod_{j = 1}^\ell w_j^i z_j^i \, f\right)\right|, \left|\varphi \left(z_0^i\prod_{j = 1}^\ell w_j^i z_j^i \right)\right| \right\}< \frac{1}{2^k}\frac{\varepsilon_k}{6}\|q\|_\varphi^2+ C(a)^{k+i_1-i_2+1} \|q\|_\varphi^2 |\varphi_p(pz_1^ip)|.$$

Next, put $m=i_2-i_1-1$. Using the facts that $p,v\in M^\varphi$, $z_1^i\in X$ when $m = 0$ and $\varphi(f X f) = 0$, $z_1^i\in X_v^m$ when $m\geq1$ and $vz_1^iv^*\in X_v^{m+2}$, we have 
\begin{align*}
|\varphi(pz_1^ip)|
&=|\varphi(pz_1^i)| \\
&=|\varphi((f-v^*v)z_1^i)|\\
&\leq|\varphi(fz_1^if)|+|\varphi(v^*vz_1^i)| \\
&= |\varphi(fz_1^if)|+|\varphi(vz_1^iv^*)|\\
&\leq\varepsilon_m\varphi(v^*v)+\varepsilon_{m+2}\varphi(v^*v)
< 2\varepsilon_{m+2}\varphi(v^*v)\\
&\leq \frac{\varepsilon_{m+2}}{6}\frac{1}{C(a)^2}\varphi(p)
\end{align*}
and hence $|\varphi_p(pz_1^ip)| \leq  \frac{\varepsilon_{m+2}}{6}\frac{1}{C(a)^2}$. Since $\varepsilon_{m+2}\leq C(a)^{-k+m+2} 2^{-k}\varepsilon_k$ when $m+2<k$ and since $m+2=k$ happens only when $\ell=2$ and $(i_1, i_2)=(1, k)$, we have 
\begin{align*}
& \sum_{\ell=2}^k\sum_{i\in I_\ell}\max\left\{\left|\varphi \left(f \, z_0^i\prod_{j = 1}^\ell w_j^i z_j^i \, f\right)\right|, \left|\varphi \left(z_0^i\prod_{j = 1}^\ell w_j^i z_j^i \right)\right| \right\}\\
&< \sum_{\ell=2}^k\sum_{i\in I_\ell}\frac{1}{2^k}\frac{\varepsilon_k}{6}\|q\|_\varphi^2+ \sum_{\ell=2}^k\sum_{i\in I_\ell}\frac{\varepsilon_{m+2}}{6}C(a)^{k+i_1-i_2-1} \|q\|_\varphi^2 \\
&<\frac{\varepsilon_k}6\|q\|_\varphi^2+ \left(\sum_{\ell=2}^k\sum_{i\in I_\ell, (i_1,i_2)\neq(1,k)}\frac{1}{2^k}\frac{\varepsilon_k}{6}\|q\|_\varphi^2 \right) + \frac{\varepsilon_k}6\|q\|_\varphi^2\\
&<\frac{\varepsilon_k}{2}\|q\|_\varphi^2.
\end{align*}
Note that we used the fact that $\sum_{\ell=2}^k\sum_{i\in I_\ell} 1=\sum_{\ell=2}^k\begin{pmatrix}
k\\
\ell
\end{pmatrix}
=2^k-k-1<2^k$. Thus, we have obtained the desired projection $q\in pQp$. This finishes the proof of the Claim.
\end{proof}

We fix now a projection $q\in pQp$ as in the Claim and we next find a unitary $w\in \mathcal U(qQq)$ that satisfies
$$\max \left\{\sum_{i\in I_1}|\varphi(f \, z_0^i w_1^i z_1^i \, f)|,\sum_{i\in I_1}|\varphi(z_0^i w_1^i z_1^i)| \right\}\leq\frac{\varepsilon_k}{2}\|q\|_\varphi^2 \quad \text{and} \quad |\varphi(ywz)|\leq\varepsilon\|q\|_\varphi^2$$
for all possible $z_0^i, z_1^i$ and all $y,z\in Y$. Since $qQq$ is diffuse and since $Y \cup \{z_0^i, z_1^i : i \in I_1\}$ is finite, we can find a unitary $w\in \mathcal U( qQq)$ such that $|\varphi(fz_0^i w z_1^if)|$, $|\varphi(z_0^i w z_1^i)|$ and $|\varphi(ywz)|$ are small enough so that we have the desired inequality. 

As we mentioned before the Claim, we obtain that $\max\{|\varphi(fxf)|,|\varphi(x)|\}\leq\varepsilon_k\varphi(u^*u)$ for all $1 \leq k\leq n$ and all $x\in X_u^k$ and $|\varphi(yuz)|\leq\varepsilon\varphi(u^*u)$ for all $y, z \in Y$. Thus $u\in \mathcal{W}$. Since $v \leq u$ and $v \neq u$, this  contradicts the maximality of $v \in \mathcal W$ and finishes the proof of Lemma \ref{technical-lemma1}.
\end{proof}

From now on, we will be working in the ultraproduct framework $M^\omega$. In Lemma \ref{technical-lemma2}, we construct nonzero partial isometries in $Q^\omega$ with good independence properties in $M^\omega$ with respect to the ultraproduct state $\varphi^\omega$ for words with letters in bounded spectral subspaces of $(\sigma_t^{\varphi^\omega})$. This is an analogue of \cite[Lemma 1.3]{Po95b}.

\begin{lem}\label{technical-lemma2}
Let $a > 0$, $X \subset M^\omega(\sigma^{\varphi^\omega}, [-a, a])$ and $Y\subset M^\omega$ be any countable subsets such that $X = X^*$ and $Y = Y^*$. Let $f\in Q^\omega$ be any nonzero projection  such that $\varphi^\omega(fXf)=0$. Then there exists a partial isometry $v \in fQ^\omega f$ satisfying $v^*v = vv^*$, $\varphi^\omega(v^*v) \geq \frac{\varphi^\omega(f)}{12 \, C(4a)^2+1}$ and  
$$\varphi^\omega(x) = 0, \forall k \geq 1, \forall x \in X_v^k \quad \text{and} \quad \varphi^\omega(yvz)=0, \forall y,z\in Y.$$
\end{lem}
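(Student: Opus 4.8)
**

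The plan is to promote the "approximate, finite-length" statement of Lemma~\ref{technical-lemma1} to the "exact, all-length" statement of Lemma~\ref{technical-lemma2} by a standard ultraproduct/diagonalization argument. The point is that in $M^\omega$, conditions that hold only approximately at each finite stage in $M$ become exact in the limit along $\omega$, and countably many constraints can be handled simultaneously.

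\emph{First} I would reduce to representing sequences. Enumerate $X=\{\xi_r\}_{r\ge 1}$ and $Y=\{\eta_s\}_{s\ge 1}$. By Proposition~\ref{analytic-sequence}, each $\xi_r\in M^\omega(\sigma^{\varphi^\omega},[-a,a])$ can be written as $\xi_r=(\xi_{r,n})^\omega$ with $\xi_{r,n}\in M(\sigma^\varphi,[-4a,4a])$; this is exactly why the constant $C(4a)$ (rather than $C(a)$) appears in the conclusion. Lift each $\eta_s=(\eta_{s,n})^\omega$ and lift the projection $f=(f_n)^\omega$ with $f_n\in Q$ projections (one may arrange this since $f\in Q^\omega$ is a projection, using the standard fact that a projection in $Q^\omega$ lifts to a sequence of projections in $Q$). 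The hypothesis $\varphi^\omega(fXf)=0$ means $\lim_{n\to\omega}\varphi(f_n\xi_{r,n}f_n)=0$ for each $r$, so after a harmless perturbation we may assume $\varphi(f_n\xi_{r,n}f_n)$ is as small as we like along $\omega$.

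\emph{Second}, for each fixed $n$ and each truncation level $N$, I would apply Lemma~\ref{technical-lemma1} with the finite data $X^{(N)}=\{\xi_{r,n}:r\le N\}^{\mathrm{sa}}\subset M(\sigma^\varphi,[-4a,4a])$, $Y^{(N)}=\{\eta_{s,n}:s\le N\}$, the projection $f_n$, word-length bound $n\ge 1$, spectral bound $4a$, and tolerance $\varepsilon=1/N$. Since $Q\subset M^\varphi$ and $Q'\cap M=\C1$ force $Q$ to be a type ${\rm II}_1$ subfactor, the hypotheses of Lemma~\ref{technical-lemma1} are met. This yields, for suitable indices, partial isometries $v_n\in f_nQf_n$ with $v_n^*v_n=v_nv_n^*$, with tracial size $\varphi(v_n^*v_n)>\varphi(f_n)/(12\,C(4a)^2+1)$, and with $|\varphi(x)|,|\varphi(f_n x f_n)|\le 1/N$ for all words $x\in (X^{(N)})_{v_n}^k$ of length $k\le N$, and $|\varphi(\eta_{s,n}v_n\eta_{t,n})|\le 1/N$.

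\emph{Third}, I would set $v=(v_n)^\omega$ and check it lands in $fQ^\omega f$ and satisfies the conclusions exactly. The diagonal choice of $N=N(n)\to\infty$ along $\omega$ forces every finite-length word condition and every $Y$-condition to hold in the limit, giving $\varphi^\omega(x)=0$ for all $x\in X_v^k$ and $\varphi^\omega(yvz)=0$ for all $y,z\in Y$; since $\varphi^\omega$ is a limit of the $\varphi$-values this is immediate, once one verifies that a word built from $v$ and letters of $X$ in $M^\omega$ is represented by the corresponding word of $v_n$ and $\xi_{r,n}$ in $M$ (which holds because multiplication is well defined on $\mathcal M^\omega(M)$ and the representing sequences are uniformly bounded). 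The size estimate passes to the limit as $\varphi^\omega(v^*v)=\lim_{n\to\omega}\varphi(v_n^*v_n)\ge \varphi^\omega(f)/(12\,C(4a)^2+1)$, and $v^*v=vv^*$ and $v\in fQ^\omega f$ follow from the corresponding properties of the $v_n$.

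The \textbf{main obstacle} is the bookkeeping that makes the diagonalization legitimate: one must choose a single sequence $(v_n)_n$ that simultaneously handles all countably many elements of $X$ and $Y$ and all word lengths $k\ge 1$, while keeping $(v_n)_n$ in $\mathcal M^\omega(M)$ so that $v=(v_n)^\omega$ is a genuine element of $M^\omega$. The clean way is to define, for each $n$, an error functional measuring the failure of the finitely many length-$\le n$ word conditions and the size condition at tolerance $1/n$, invoke Lemma~\ref{technical-lemma1} to see this error set lies in $\omega$, and then extract $v_n$ from the intersection; the uniform bound $\|v_n\|_\infty\le 1$ guarantees $(v_n)_n\in\mathcal M^\omega(M)$ automatically. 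The only subtlety beyond routine is tracking that the spectral bound degrades from $a$ to $4a$ (hence $C(a)\rightsquigarrow C(4a)$) under the passage from $M^\omega$ to representing sequences, which is precisely accounted for by Proposition~\ref{analytic-sequence}.
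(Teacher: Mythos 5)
Your proposal is correct and follows essentially the same route as the paper: lift $X$, $Y$, $f$ to representing sequences (using Proposition~\ref{analytic-sequence} to place the lifts of $X$ in $M(\sigma^\varphi,[-4a,4a])$, which is exactly where $C(4a)$ comes from), apply Lemma~\ref{technical-lemma1} for each $n$ to the first $n$ lifted letters with word-length bound $n$ and tolerance $1/n$, and set $v=(v_n)^\omega$. One small sharpening: since Lemma~\ref{technical-lemma1} requires $\varphi(f_n X f_n)=0$ exactly (not merely small), your ``harmless perturbation'' should be the explicit replacement $\xi_{r,n}\mapsto \xi_{r,n}-\frac{\varphi(f_n\xi_{r,n}f_n)}{\varphi(f_n)}\,1$, which keeps the lift in $M(\sigma^\varphi,[-4a,4a])$, represents the same element of $M^\omega$ because $\varphi^\omega(f\xi_r f)=0$, and makes the hypothesis hold on the nose --- precisely as the paper does.
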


\begin{proof}
Write $X=\{x^\ell : \ell\in \N\}$, $Y=\{y^\ell : \ell\in \N\}$, $x^\ell=(x_n^\ell)^\omega$ and $y^\ell=(y_n^\ell)^\omega$ for some $(x^\ell_n)_n,(y_n^\ell)_n\in \mathcal M^\omega(M)$. We also write $f=(f_n)^\omega$ for some nonzero projections $f_n\in Q$. By Proposition \ref{analytic-sequence}, we may assume that $x^\ell_n\in M(\sigma^{\varphi}, [-4a, 4a])$ for every $n \in \N$. We also have $f_nx^\ell_nf_n\in M(\sigma^{\varphi}, [-4a, 4a])$ for every $n \in \N$. Also, since $\varphi^\omega(fXf) = 0$, up to replacing $x^\ell_n$ by $x^\ell_n-\frac{\varphi(f_nx^\ell_nf_n)}{\varphi(f_n)} 1 \in M(\sigma^\varphi, [-4a, 4a])$, we may assume that $\varphi(f_nx^\ell_nf_n)=0$ for all $\ell, n \in \N$. 

We can then apply Lemma \ref{technical-lemma1} to $\varepsilon_n = \frac{1}{n}>0$, $f_n \in Q$, $X_n=\{x_n^\ell : \ell\leq n\}$ and $Y_n=\{y_n^\ell : \ell\leq n\}$ to find a partial isometry $v_n\in f_nQf_n$ such that $v_n^*v_n=v_nv_n^*$, $\varphi(v_n^*v_n)>\frac{\varphi(f_n)}{12 \, C(4a)^2+1}$, $|\varphi(x) | \leq \frac{1}{n}$ for all $x \in \bigcup_{k = 1}^n (X_n)_{v_n}^k$ and $|\varphi(yv_nz)|\leq\frac{1}{n}$ for all $y,z\in Y_n$. 

Then $v=(v_n)^\omega\in Q^\omega$ is a partial isometry satisfying $v^*v=vv^*$, $\varphi^\omega(v^*v)=\lim_{n\rightarrow\omega}\varphi(v_n^*v_n)\geq\lim_{n\rightarrow\omega}\frac{\varphi(f_n)}{12 \, C(4a)^2+1}=\frac{\varphi^\omega(f)}{12 \, C(4a)^2+1}$ and $\varphi^\omega(yvz)=0$ for all $y,z\in Y$. As usual, put $(v_n)_i=v_n$ or $v_n^*$. Moreover, for all $k\geq 1$ and all $x=x_0 \prod_{i = 1}^k v_i x_i\in X_v^k$ with $x_i=x^{\ell_i}$ for some $\ell_i \in \N$, using the fact that $|\varphi(x^{\ell_0}_n \prod_{i = 1}^k (v_n)_i  x_n^{\ell_i})|\leq \frac{1}{n}$ whenever $n\geq \max\{k, \ell_i : i \leq k\}$, we have
$$|\varphi^\omega(x) |=\lim_{n\rightarrow\omega}\left|\varphi\left(x^{\ell_0}_n \prod_{i = 1}^k (v_n)_i  x_n^{\ell_i}\right)\right| \leq\lim_{n\rightarrow\omega}\frac{1}{n} =0.$$ 
This finishes the proof of Lemma \ref{technical-lemma2}. 
\end{proof}

Using a maximality argument, we construct in Lemma \ref{technical-lemma3} unitaries in $\mathcal U(Q^\omega)$ with good independence properties in $M^\omega$ with respect to the ultraproduct state $\varphi^\omega$ for words of bounded length and with letters in bounded spectral subspaces of $(\sigma_t^{\varphi^\omega})$. This is an analogue of \cite[Lemma 1.4]{Po95b}.

\begin{lem}\label{technical-lemma3}
Let $a > 0$, $n \geq 2$, $X \subset (M^\omega \ominus \C1) \cap M^\omega(\sigma^{\varphi^\omega}, [-a, a])$ and $Y\subset M^\omega$ be any countable subsets such that $X = X^*$ and $Y = Y^*$. Then there exists a unitary $v \in \mathcal U(Q^\omega)$ satisfying
$$\varphi^\omega(x) = 0, \forall x \in \bigcup_{k = 1}^n X_v^k \quad \text{and} \quad \varphi^\omega(yvz)=0, \forall y,z\in Y.$$
\end{lem}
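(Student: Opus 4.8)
The plan is to upgrade the partial isometries produced by Lemma~\ref{technical-lemma2} to a genuine unitary by means of a maximality argument, following \cite[Lemma 1.4]{Po95b}. Let $\mathcal V$ be the set of all partial isometries $v \in Q^\omega$ with $v^*v = vv^*$ that satisfy the two required conditions, namely $\varphi^\omega(x) = 0$ for all $x \in \bigcup_{k=1}^n X_v^k$ and $\varphi^\omega(yvz) = 0$ for all $y, z \in Y$; note that $0 \in \mathcal V$. Order $\mathcal V$ by $v_1 \leq v_2$ if and only if $v_1 = v_2 v_1^*v_1$. The first step is to check that $(\mathcal V, \leq)$ is inductive: along an increasing chain the initial projections $v_\alpha^* v_\alpha$ increase, the net $(v_\alpha)$ converges in the strong${}^*$ topology to a partial isometry $v \in Q^\omega$ with $v^*v = vv^* = \sup_\alpha v_\alpha^*v_\alpha$, and since each finite word map is jointly strong${}^*$ continuous on the unit ball and $\varphi^\omega$ is normal, the two conditions pass to the limit. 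Zorn's lemma then yields a maximal element $v \in \mathcal V$, and I set $f = 1 - v^*v = 1 - vv^* \in Q^\omega$. It remains to prove $f = 0$, for then $v$ is the desired unitary.

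Second, I argue by contradiction: assuming $f \neq 0$, I will produce a nonzero partial isometry $w \in fQ^\omega f$ with $w^*w = ww^*$ such that $v + w \in \mathcal V$; since a short computation using $w = fwf$ and $f v^*v = 0$ gives $v \leq v + w$ with $v \neq v+w$, this contradicts maximality. The element $w$ is obtained by applying Lemma~\ref{technical-lemma2} to the projection $f$ and to enlarged countable, self-adjoint data $\widetilde X, \widetilde Y$ built from $X, Y$ and $v$. The enlargement is dictated by expanding, for each $1 \leq k \leq n$, a word in $X_{v+w}^k$ into its $2^k$ terms according to $v + w$, and likewise $y(v+w)z$ into two terms. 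The pure-$v$ term and $yvz$ vanish because $v \in \mathcal V$; taking $Y \subseteq \widetilde Y$, the pure-$w$ term $x_0 \prod w_i x_i$ and $ywz$ will vanish by the conclusion of Lemma~\ref{technical-lemma2} once the relevant letters have been placed in $\widetilde X$.

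Third, and this is the crux, I must force the \emph{mixed} terms to vanish. Such a term factors as $B_0\, w_{j_1} B_1 \cdots w_{j_m} B_m$, where each $B_r$ is a run of letters from $X$ interspersed with copies of $v^{\pm}$. Since $w = fwf$ satisfies $wf = fw = w$ and $Q^\omega \subset (M^\omega)^{\varphi^\omega}$, I would use the trace-like property of $\varphi^\omega$ on $Q^\omega$ to bring one copy of $w$ cyclically to the front and rewrite the term as $\varphi^\omega\bigl(w^{\pm}\,\widetilde x_1\, w^{\pm}\cdots w^{\pm}\,\widetilde x_m\bigr)$ with $\widetilde x_r = f B_r f$ for $r < m$ and $\widetilde x_m = f B_m B_0 f$. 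Because spectral subspaces of products add and $v, f$ are centralizer elements of spectral weight $0$, these blocks lie in $M^\omega(\sigma^{\varphi^\omega}, [-na, na])$; I make them mean-zero for $\varphi^\omega(f\cdot f)$ by subtracting the appropriate scalar multiples of $f$ and then place them, together with their adjoints, in $\widetilde X$. For the single-$w$ terms I instead cyclically rewrite $\varphi^\omega(B_0 w^{\pm} B_1) = \varphi^\omega\bigl(w^{\pm}(fB_1B_0f)\bigr)$ and place the boundary products $fB_1B_0f$ in $\widetilde Y$. After checking that $\widetilde X, \widetilde Y$ remain countable, self-adjoint and satisfy $\varphi^\omega(f\widetilde X f) = 0$, Lemma~\ref{technical-lemma2} applies and kills all these rewritten words.

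Finally, the genuine technical difficulty — the step I expect to be the main obstacle — is the bookkeeping forced by the mean-zero normalization: replacing each block $fB_rf$ by $fB_rf - c_r f$ and using $wf = fw = w$ produces correction terms in which two copies of $w$ become adjacent (equal to $f$, $w^2$ or $w^{*2}$), that is, words that are strictly shorter or that merge two blocks. I would absorb these by a downward induction on the length $k$ (and on the number of $w$-factors), the reduced words being controlled either by the conditions already imposed on $v$, by the self-adjointness of $\widetilde Y$, or by the conclusion of Lemma~\ref{technical-lemma2} at a smaller length; this is also where the hypothesis $n \geq 2$ enters. Carrying out this induction shows $v + w \in \mathcal V$, contradicting maximality, so that $f = 0$ and $v$ is the required unitary.
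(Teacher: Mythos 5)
Your first two steps (Zorn's lemma on the set of admissible partial isometries, passing to the defect projection $f=1-v^*v$, and trying to contradict maximality by applying Lemma~\ref{technical-lemma2} to $f$ with enlarged input sets) coincide with the paper's proof. The gap is in your third and fourth steps, and it is not mere bookkeeping. After cyclically compressing a mixed term into $\varphi^\omega\bigl(w_{j_1}(fB_1f)\,w_{j_2}(fB_2f)\cdots w_{j_m}(fB_mB_0f)\bigr)$ and renormalizing each block to $\widetilde x_r=fB_rf-c_rf$, your expansion contains the term in which \emph{every} block is replaced by its scalar. Already for $m=2$ (two $w$'s and one $v$, which occurs for every $n\geq 3$) this term equals $c_1c_2\,\varphi^\omega(w_{j_1}w_{j_2})$, and when $w_{j_2}=w_{j_1}^*$ it equals $c_1c_2\,\varphi^\omega(w^*w)$, which is nonzero whenever $c_1c_2\neq 0$ since $\varphi^\omega$ is faithful. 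So term-by-term vanishing fails outright unless you can prove that the constants $c_r$ of the \emph{interior} blocks vanish, i.e.\ that $\varphi^\omega(fzf)=0$ for every $z\in X\cup\bigcup_{k=1}^{n-2}X_v^k$; that statement is exactly the missing idea, and your proposed downward induction cannot substitute for it. Indeed, the conclusions of Lemma~\ref{technical-lemma2} only concern alternating words in single letters $w^{\pm1}$ separated by elements of sets fixed \emph{before} $w$ exists; they say nothing about quantities such as $\varphi^\omega(w^*w)$, $\varphi^\omega(w^2\,\widetilde x\,w^2\,\widetilde x')$ or $\varphi^\omega(w^*w\,\widetilde x\,w^*w\,\widetilde x')$, which your correction terms produce and which cannot be removed by cyclic rotation (rotation fixes at most one power block, the one at the seam). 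Note also that an adjacent pair $ww^*=w^*w$ is a projection $e\leq f$ with $e\neq f$ in general (Lemma~\ref{technical-lemma2} only bounds $\varphi^\omega(w^*w)$ from below), not $f$ as you wrote.

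The repair — and this is what the paper does — is to drop the compression and renormalization altogether. Since $p:=f$ and $v$ lie in the centralizer of $\varphi^\omega$ and $v^*v=vv^*$, for every $z\in X_v^k$ one has $\varphi^\omega(pzp)=\varphi^\omega(zp)=\varphi^\omega(z)-\varphi^\omega(zv^*v)=\varphi^\omega(z)-\varphi^\omega(vzv^*)$, and $vzv^*\in X_v^{k+2}$; hence $\varphi^\omega(pzp)=0$ for all $z\in X':=X\cup\bigcup_{k=1}^{n-2}X_v^k$, because $v$ already kills all words of length at most $n$ and $k+2\leq n$. This is where the hypothesis $n\geq 2$ enters (not in your induction). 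One then applies Lemma~\ref{technical-lemma2} with spectral bound $(n-1)a$, projection $p$, first input set $X'$, and second input set containing $X\cup\{1\}\cup\bigcup_{k=1}^{n-1}X_v^k\cup Y$. With this choice, every mixed term $z_0^i\prod_{j}w_j^iz_j^i$ in the expansion of a word of $X_{v+w}^k$ is \emph{already} in the scope of the conclusions: for $\ell\geq2$ all letters lie in $X'\cup\{1\}$, so the word lies in $(X')_w^\ell$ and the first conclusion applies; for $\ell=1$ the two letters lie in the second input set and the second conclusion applies. No cyclic permutation, no correction terms, and no powers of $w$ or projections $w^*w$ ever arise. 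I recommend you prove the displayed identity for $\varphi^\omega(pzp)$ and rebuild the second half of your argument around it.
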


\begin{proof}
Define $\mathcal{W}$ as the set of all partial isometries $v$ in $Q^\omega$ such that 
\begin{itemize}
	\item $v^*v=vv^*$,
	\item $|\varphi^\omega(x)|=0$ for all $1 \leq k \leq n$ and all $x\in X_v^k$ and
	\item $|\varphi^\omega(yvz)|=0$ for all $y,z\in Y$.
\end{itemize}
Note that $\mathcal W$ is not empty since $0 \in \mathcal W$. We define a partial order on $\mathcal W$ by $w_1\leq w_2$ $\Leftrightarrow$ $w_1=w_2w_1^*w_1$. Then $(\mathcal W, \leq)$ is an inductive set. By Zorn's lemma, take a maximal element $v\in \mathcal{W}$. We show that $v$ is a unitary element that satisfies the conclusion of Lemma \ref{technical-lemma3}.

Suppose by contradiction that $v$ is not a unitary and put $p=1-v^*v\neq0$. Since $v\in\mathcal{W}$, $n \geq 2$ and $\varphi^\omega(X) = 0$, we have $\varphi^\omega(p X p) = 0$ and $\varphi^\omega(pX_v^kp)= 0$ for all $1\leq k\leq n-2$. Moreover, we have $X_v^k\subset M^\omega(\sigma^{\varphi^\omega},[-(n - 1)a,(n - 1)a])$ for all $k\leq n - 2$. So, we can apply Lemma \ref{technical-lemma2} to $(n - 1)a$, $p \in Q^\omega$, $X\cup\bigcup_{k = 1}^{n-2} X_v^k$ and $Y\cup \bigcup_{k=1}^{n-1}X_v^k$ to obtain a nonzero partial isometry $w\in pQ^\omega p$ that satisfies the conclusion of Lemma \ref{technical-lemma2}. 

Put $u=v+w$. Then we have $u^*u=uu^*$ and $\varphi^\omega(yuz)=0$ for all $y,z\in Y$. Moreover, for all $1 \leq k \leq n$ and all $x=x_0\prod_{i=1}^ku_ix_i \in X_u^k$, we have
$$\varphi^\omega(x)=\varphi^\omega\left(x_0 \prod_{i = 1}^k v_i x_i \right) + \sum_{\ell=1}^k\sum_{i\in I_\ell}\varphi^\omega\left(z_0^i \prod_{j = 1}^\ell w_j^i z_j^i\right)$$
where each $z^i_j$ is given as in the proof of Lemma \ref{technical-lemma1}.

Since $v\in\mathcal{W}$, we have $\varphi^\omega(x_0 \prod_{i = 1}^k v_i x_i) = 0$. Next, when $\ell\geq 2$, each $z_j^i$ belongs to $X$ or $X_v^k$ for some $1\leq k\leq n-2$ and hence $\varphi^\omega(z_0^i \prod_{j = 1}^\ell w_j^i z_j^i)=0$ by the choice of the partial isometry $w$ and the first equality guaranteed by Lemma \ref{technical-lemma2}. Finally, when $\ell=1$, each $z_0^i$ and each $z_1^i$  belongs to $X$ or $X_v^{k}$ for some $1\leq k\leq n-1$ and hence $\varphi^\omega(z_0^i w_1^i z_1^i)=0$ by the choice of the partial isometry $w$ and the second equality guaranteed by Lemma \ref{technical-lemma2}. Thus $\varphi^\omega(x)=0$ and so $u \in \mathcal{W}$. Since $v \leq u$ and $v \neq u$, this contradicts the maximality of $v \in \mathcal W$ and finishes the proof of Lemma \ref{technical-lemma3}.
\end{proof}

Using a diagonal procedure, we finally construct in Lemma \ref{technical-lemma4} unitaries in $\mathcal U(Q^\omega)$ with good independence properties in $M^\omega$ with respect to the ultraproduct state $\varphi^\omega$. This is a new step compared to the strategy developed in \cite{Po95b}.

\begin{lem}\label{technical-lemma4}
Let $X \subset (M^\omega \ominus \C1) \cap (\cup_{n\in\N}M^\omega(\sigma^{\varphi^\omega}, [-n, n]))$ be any countable subset such that $X = X^*$. Then there exists a unitary $v \in \mathcal U(Q^\omega)$ such that 
$$\varphi^\omega(x)  = 0, \forall k \geq 1, \forall x \in X_v^k.$$
\end{lem}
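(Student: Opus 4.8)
The plan is to reduce to Lemma \ref{technical-lemma3} by a diagonal argument that simultaneously removes the bound on the word length and the bound on the spectral subspace. First I would enumerate $X = \{x^\ell : \ell \in \N\}$ and fix representatives $x^\ell = (x^\ell_n)^\omega$ with $(x^\ell_n)_n \in \mathcal M^\omega(M)$. Since each $x^\ell$ lies in some $M^\omega(\sigma^{\varphi^\omega}, [-n_\ell, n_\ell])$ and $X = X^*$, for every integer $m \geq 2$ the finite self-adjoint set $X_m = \{x^\ell, (x^\ell)^* : \ell \leq m\}$ is contained in $(M^\omega \ominus \C 1) \cap M^\omega(\sigma^{\varphi^\omega}, [-a_m, a_m])$ with $a_m = \max\{n_\ell : \ell \leq m\}$. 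Applying Lemma \ref{technical-lemma3} to $a_m$, $n = m$, $X = X_m$ and $Y = \emptyset$, I obtain a unitary $V_m \in \mathcal U(Q^\omega)$ with $\varphi^\omega(x) = 0$ for all $x \in \bigcup_{k=1}^m (X_m)_{V_m}^k$. Because $Q \subset M^\varphi$ is a type ${\rm II_1}$ factor, $\varphi|_Q$ is a trace and $Q^\omega$ is the usual tracial ultrapower, so I may fix a lift $V_m = (V_{m,n})^\omega$ with each $V_{m,n} \in \mathcal U(Q)$.

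Next I would set up the diagonalization. The set $\bigcup_{k=1}^m (X_m)_{V_m}^k$ is finite, and each of its elements $x$ satisfies $\varphi^\omega(x) = \lim_{n \to \omega} \varphi(\,\cdots\,) = 0$, where the word is evaluated at the chosen representatives $V_{m,n}$ and $x^\ell_n$. Hence there is a set $S_m \in \omega$ such that $|\varphi(\,\cdots\,)| < 1/m$ at every $n \in S_m$, simultaneously for all these finitely many words; replacing $S_m$ by $\bigcap_{j \leq m} S_j$ I may assume $S_2 \supseteq S_3 \supseteq \cdots$. I then define $m(n) = \max\{m \leq n : n \in S_m\}$ (with the convention $m(n) = 2$ if this set is empty) and put $v = (V_{m(n), n})^\omega$. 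Since $V_{m(n),n} \in \mathcal U(Q) \subset \mathcal U(M^\varphi)$ and unitaries in the centralizer preserve $\|\cdot\|_\varphi$ on both sides, the sequence $(V_{m(n),n})_n$ is a multiplier and $v \in \mathcal U(Q^\omega)$.

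The verification then rests on two observations. First, $m(n) \to \infty$ along $\omega$: for every $M$, the set $\{n : m(n) \geq M\}$ contains $S_M \cap [M, \infty) \in \omega$, so $m(n) \geq M$ for $\omega$-almost every $n$. Second, $n \in S_{m(n)}$ holds by construction. Now I would fix $k \geq 1$ and a word $x = x^{\ell_0} \prod_{i=1}^k v_i x^{\ell_i} \in X_v^k$, and set $M_0 = \max\{k, \ell_0, \ldots, \ell_k\}$. For $\omega$-almost every $n$ one has $m(n) \geq M_0$, so the word length $k$ and all indices $\ell_i$ are at most $m(n)$; thus the evaluated word $x^{\ell_0}_n \prod_{i=1}^k (V_{m(n),n})_i x^{\ell_i}_n$ is one of the finitely many words controlled by $S_{m(n)}$, and since $n \in S_{m(n)}$ its $\varphi$-value is strictly less than $1/m(n)$ in modulus. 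Passing to the limit along $\omega$ yields $|\varphi^\omega(x)| \leq \lim_{n \to \omega} 1/m(n) = 0$, which is exactly the desired conclusion.

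The step I expect to be the main obstacle is the bookkeeping in the diagonal selection: one must choose the sets $S_m$ and the index function $m(n)$ so that, at the same time, $m(n) \to \infty$ along $\omega$ and each fixed word eventually lands inside the finite family that $S_{m(n)}$ controls. What makes this work cleanly is that for a fixed word the relevant length and letter-indices are bounded, so once $m(n)$ exceeds that bound the word is automatically among those handled at stage $m(n)$. A secondary point worth spelling out carefully is that $(V_{m(n),n})_n$ genuinely defines an element of $\mathcal M^\omega(Q)$, and hence a unitary of $Q^\omega$, which is where the hypothesis $Q \subset M^\varphi$ (so that $\varphi$ is tracial on $Q$) is used.
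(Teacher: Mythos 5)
Your proof is correct, and while it shares the paper's overall strategy --- control finite pieces of $X$ via Lemma \ref{technical-lemma3}, then diagonalize --- the diagonalization itself is implemented in a genuinely different way. The paper never applies Lemma \ref{technical-lemma3} to the elements $x^\ell \in M^\omega$ themselves: it first modifies the representing sequences (via Proposition \ref{analytic-sequence}, as in Lemma \ref{technical-lemma2}) so that the coordinate sets $X_n=\{x^\ell_n : \ell\le n\}$ lie in $(M\ominus \C 1)\cap M(\sigma^\varphi,[-4a_n,4a_n])$, regards these coordinates as constant sequences in $M^\omega$, applies Lemma \ref{technical-lemma3} to each $X_n$, and then extracts a single good coordinate $m_n$ from a lift $(v^n_m)_m\subset\Ball(Q)$ of the resulting unitary, setting $u=(v^n_{m_n})^\omega$; so for each stage it selects one coordinate. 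You do the transpose: you apply Lemma \ref{technical-lemma3} directly to the ultraproduct elements $\{x^\ell : \ell\le m\}$, lift each $V_m$ to a sequence of unitaries of $Q$, and then for each coordinate $n$ select a stage $m(n)$ via the ultrafilter sets $S_m$. Each route buys something. The paper only ever needs to lift an element of $Q^\omega$ to a bounded sequence in $Q$ together with the condition $\|1-u_n^*u_n\|_\varphi<\frac1n$, whereas you invoke the stronger (standard for tracial ultrapowers, but worth a line of justification) fact that unitaries of $Q^\omega$ lift to sequences of unitaries of $Q$; alternatively you could lift to $\Ball(Q)$ and build the approximate-unitarity condition $\|1-V_{m,n}^*V_{m,n}\|_\varphi<\frac1m$ into $S_m$, exactly as the paper does. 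In return, your route avoids Proposition \ref{analytic-sequence} entirely and, more significantly, never uses the embedding $M\subset M^\omega$ by constant sequences: granted Lemma \ref{technical-lemma3} in the general framework of ultraproducts $(M_n,\varphi_n)^\omega$ (which the remark closing Section \ref{incremental} asserts holds), your argument proves Lemma \ref{technical-lemma4} in that generality as well, whereas the paper's own proof, as that same remark points out, requires the sequence $M_n=M$ to be constant. One small point to tidy up: with your convention $m(n)=2$ when $\{m\le n : n\in S_m\}$ is empty, the relation $n\in S_{m(n)}$ is not literally automatic, but it does hold for $\omega$-almost every $n$ (on $S_2\cap[2,\infty)$ the defining set is nonempty), and since your final estimate is an $\omega$-limit over finitely many conditions, this suffices.
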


\begin{proof}
The proof is similar to the one of Lemma \ref{technical-lemma2}. Write $X=\{x^\ell : \ell\in \N\}$ and $x^\ell=(x_n^\ell)^\omega\in M^\omega$ for some $(x^\ell_n)_n \in \mathcal M^\omega(M)$. Then for every $n \in \N$, the subset $\{x^\ell : \ell \leq n\}$ is contained in $M^\omega(\sigma^{\varphi^\omega}, [-a_n, a_n])$ for some $a_n>0$. So, proceeding exactly as in the proof of Lemma \ref{technical-lemma2}, we may assume that the subset $X_n=\{x_n^\ell : \ell \leq n\}$ is contained in $(M \ominus \C1)\cap M(\sigma^\varphi, [-4a_n, 4a_n])$. 

Since $M \subset M^\omega$ and $\sigma_t^{\varphi^\omega} |_M = \sigma_t^\varphi$ for all $t \in \R$, we may regard each subset $X_n$ as a subset of $M^\omega$ and we have $X_n \subset (M^\omega \ominus \C1) \cap M^\omega(\sigma^{\varphi^\omega}, [-4a_n, 4a_n])$ for every $n \in \N$. Then for every $n \in \N$, we choose a unitary $v^n\in \mathcal U(Q^\omega)$ that satisfies the conclusion of Lemma \ref{technical-lemma3} for the subset $X_n$. Write $v^n=(v^n_m)^\omega$ for some $v_m^n \in \Ball(Q)$. Observe that $\lim_{m \to \omega} \|1 - (v_m^n)^*v_m^n\|_\varphi = 0$ for every $n \in \N$. We will now construct a new unitary element in $\mathcal U(Q^\omega)$ that satisfies the conclusion of Lemma \ref{technical-lemma4} by choosing carefully ``diagonal'' elements from $v^n = (v_m^n)^\omega \in \mathcal U(Q^\omega)$. 

By the choice of $v^n$, for all $1 \leq k\leq n$ and all $x=x_0\prod_{i=1}^k(v^n)_ix_i\in(X_n)_{v^n}^k$, we have
$$0=\varphi^\omega(x) = \lim_{m\rightarrow\omega} \varphi\left(x_0\prod_{i=1}^k(v^n_m)_ix_i\right).$$
So, for every $n>0$, there exists $m_n \in \N$ large enough so that $u_n = v^n_{m_n}\in \Ball(Q)$ satisfies 
\begin{itemize}
\item $|\varphi(x_0\prod_{i=1}^k(u_n)_ix_i)|<\frac{1}{n}$ for all $1 \leq k\leq n$ and all $x_0,x_k\in \{1\}\cup X_n$, $x_i\in X_n$ and $(u_n)_i=u_n$ or $u_n^*$ and
\item $\|1-u_n^*u_n\|_\varphi < \frac{1}{n}$.
\end{itemize}
 Then we have $u=(u_n)^\omega\in  \mathcal U(Q^\omega)$ by the second item and the fact that $u_n \in Q \subset M^\varphi$. Moreover, by the first item, for all $k \geq 1$ and all $x=x_0 \prod_{i = 1}^k u_i x_i\in X_u^k$ with $x_i=x^{\ell_i}$ for some $\ell_i \in \N$, using the fact that $|\varphi(x^{\ell_0}_n \prod_{i = 1}^k (u_n)_i  x_n^{\ell_i})|\leq \frac{1}{n}$ whenever $n\geq \max \{k, \ell_i : i \leq k \}$, we have
$$|\varphi^\omega(x) |=\lim_{n\rightarrow\omega}\left|\varphi\left(x^{\ell_0}_n \prod_{i = 1}^k (u_n)_i  x_n^{\ell_i}\right) \right| \leq\lim_{n\rightarrow\omega}\frac{1}{n} =0.$$
This finishes the proof of Lemma \ref{technical-lemma4}.
\end{proof}

\begin{rem} 
We conclude this Section with a few remarks.
\begin{enumerate}
\item In Lemma \ref{technical-lemma4}, we can actually find $v \in \mathcal U(Q^\omega)$ such that $\varphi^\omega(yvz)=0$ for all $y,z\in Y$ where $Y\subset M^\omega$ is any given countable subset. Also, we can construct $v \in \mathcal U(Q^\omega)$ to be a Haar unitary by using the subsets $X_v^{k,n}$ as in \cite[Lemma 1.2]{Po95b}. However we will not use this observation in this article.

\item While Lemmas \ref{technical-lemma2} and \ref{technical-lemma3} could be proven in the more general framework of ultraproduct von Neumann algebras $(M_n, \varphi_n)^\omega$ with $(M_n, \varphi_n)$ a non-type ${\rm I}$ $\sigma$-finite factor endowed with a faithful normal state and $Q_n \subset M_n^{\varphi_n}$ any von Neumann subalgebra satisfying $Q_n' \cap M_n = \C 1$ for every $n \in \N$, the proof of Lemma \ref{technical-lemma4} does require the sequence $M_n = M$ to be constant as we need to regard $M \subset M^\omega$ as a von Neumann subalgebra.
\end{enumerate}
\end{rem}

\section{Proofs of the main results}\label{proofs}

\subsection*{Proof of Theorem \ref{thmA}}

Let $i \in \{1, 2\}$. Since $P_i$ has separable predual and is globally invariant under the modular automorphism group $(\sigma_t^{\varphi^\omega})$, using the proof of \cite[Proposition 4.11]{AH12}, we may find a countable
subset $X_{i} \subset \Ball(P_i \ominus \C 1) \cap (\cup_{n \in \N} M^\omega(\sigma^{\varphi^\omega}, [-n, n]))$ satisfying $X_{i}^* = X_{i}$ and such that $X_i$ is $\|\cdot\|_{\varphi^\omega}$-dense in $\Ball(P_i \ominus \C 1)$. Applying Lemma \ref{technical-lemma4} to the countable subset $X = X_1 \cup X_2$, there exists a unitary $v \in \mathcal U(Q^\omega)$ such that 
$$\varphi^\omega(x_0 \, vy_1v^* \cdots x_{k - 1} \,  v y_{k} v^* \, x_k) = 0$$
for all $k \geq 1$, all $x_0, x_k \in X_1 \cup \{1\}$, all $x_1, \dots, x_{k - 1} \in X_1$ and all $y_1, \dots, y_{k} \in X_2$. By $\|\cdot\|_{\varphi^\omega}$-density of $X_i$ in $\Ball(P_i \ominus \C 1)$ for all $i \in \{1, 2\}$, we obtain that $P_1$ and $v P_2 v^*$ are $\ast$-free inside $M^\omega$ with respect to the state $\varphi^\omega$. This finishes the proof of Theorem \ref{thmA}.

\subsection*{Proof of Corollary \ref{corB}}

Let $i \in \{1, 2\}$. There exists a normal $\ast$-embedding $\pi_i : M_i \to M^\omega$ together with a faithful normal conditional expectation $\rE_i : M^\omega \to \pi_i(M_i)$. Put $\psi_i = \varphi_i \circ \pi_i^{-1} \circ \rE_i$. Let $\theta \in M_\ast$ be a faithful normal state such that $(M^\theta)' \cap M = \C 1$. Since $M$ is a type ${\rm III_1}$ factor, $M^\omega$ has strictly homogeneous state space by \cite[Theorem 4.20]{AH12} and hence there exists $u_i \in \mathcal U(M^\omega)$ such that $\theta^\omega = \psi_i \circ \Ad(u_i)$. Observe that $\sigma_t^{\theta^\omega} = \Ad(u_i^*) \circ \sigma_t^{\psi_i} \circ \Ad(u_i)$ for all $t \in \R$. Put $P_i = u_i^* \pi_i(M_i) u_i$ and observe that $P_i$ is globally invariant under the modular automorphism group $(\sigma_t^{\theta^\omega})$.

By Theorem \ref{thmA}, there exists a unitary $v \in \mathcal U((M^\theta)^\omega)$ such that $P_1$ and $vP_2v^*$ are $\ast$-free inside $M^\omega$ with respect to the ultraproduct state $\theta^\omega$. Observe that since $P_i$ is globally invariant under the modular automorphism group $(\sigma_t^{\theta^\omega})$ for all $i \in \{1, 2\}$ and since $v \in \mathcal U((M^\omega)^{\theta^\omega})$, we have that $P = P_1 \vee v P_2 v^*$ is globally invariant under the modular automorphism group $(\sigma_t^{\theta^\omega})$ and hence by \cite[Theorem IX.4.2]{Ta03}, there exists a faithful normal conditional expectation $\rE : M^\omega \to P$. Moreover, by uniqueness of the free product von Neumann algebra and since $v \in \mathcal U((M^\omega)^{\theta^\omega})$, we have the following state-preserving $\ast$-isomorphisms
$$(P, \theta^\omega|_P) \cong (P_1, \theta^\omega |_{P_1}) \ast (v P_2 v^*, \theta^\omega |_{v P_2 v^*}) \cong (M_1, \varphi_1) \ast (M_2, \varphi_2).$$ 
Therefore, the free product von Neumann algebra $(M_1, \varphi_1) \ast (M_2, \varphi_2)$ embeds with expectation into $M^\omega$. This finishes the proof of Corollary \ref{corB}.

\subsection*{Proof of Theorem \ref{thmC}}
We follow the same strategy as in the proof of \cite[Corollary 2.4]{Po95b}. Since any element in $M^\omega$ can be approximated in the uniform norm $\|\cdot\|_\infty$ by finite linear combinations of projections in $M^\omega$, we only have to prove Theorem \ref{thmC} when $x=\sum_{j=1}^k\alpha_jf_j$, where $\alpha_j\in\C$ and $f_j \in M^\omega$ is a projection for all $1 \leq j \leq k$. Let $B \subset M^\omega$ be the von Neumann subalgebra generated by the set $\{\sigma_t^{\varphi^\omega}(f_j) : 1 \leq j \leq k, t \in \R\}$. Observe that $B$ has separable predual and is globally invariant under the modular automorphism group $(\sigma_t^{\varphi^\omega})$.

\begin{claim}
There exists a diffuse von Neumann subalgebra $A\subset Q^\omega$ with separable predual such that 
$$\|p(f-\varphi^\omega(f)1)p\|_\infty=\frac{1}{n}-2\frac{\varphi^\omega(f)}{n}+ \sqrt{ 4 \, \varphi^\omega(f)(1-\varphi^\omega(f))\frac{1}{n}(1-\frac{1}{n}) }$$
for all nonzero projections $p\in A$ satisfying $\varphi^\omega(p)=1/n$ and and all nonzero projections $f\in B$ satisfying $\varphi^\omega(f)\leq 1-1/n$.
\end{claim}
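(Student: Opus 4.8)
The plan is to reduce the Claim to the standard free-probability computation of the $\ast$-distribution of $pfp$ for two free projections, the non-trivial input being the arrangement of freeness. Since $Q^\omega \subset (M^\omega)^{\varphi^\omega}$, every von Neumann subalgebra of $Q^\omega$ is globally invariant under $(\sigma_t^{\varphi^\omega})$. I would therefore fix a diffuse abelian subalgebra $A_0 \subset Q^\omega$ with separable predual and apply Theorem \ref{thmA} to the modular-invariant, separable-predual pair $P_1 = A_0$ and $P_2 = B$, obtaining a unitary $v \in \mathcal U(Q^\omega)$ such that $A_0$ and $vBv^*$ are $\ast$-free inside $M^\omega$ with respect to $\varphi^\omega$. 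Because $v \in \mathcal U(Q^\omega) \subset (M^\omega)^{\varphi^\omega}$, the automorphism $\Ad(v^*)$ preserves $\varphi^\omega$, so $A := v^* A_0 v \subset Q^\omega$ and $B = v^*(vBv^*)v$ are again $\ast$-free with respect to $\varphi^\omega$. The algebra $A$ is diffuse with separable predual and, being diffuse, contains projections of every prescribed value $\varphi^\omega(p) = 1/n$.

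Now I would fix projections $p \in A$ and $f \in B$ with $\alpha := \varphi^\omega(p) = 1/n$ and $\beta := \varphi^\omega(f) \le 1 - 1/n$. Since $A$ and $B$ are $\ast$-free, so are $p$ and $f$, and the freeness relation $(\ref{freeness})$ determines $\varphi^\omega$ on every reduced word in $p$ and $f$. In particular, all moments $\varphi^\omega((pfp)^k)$, $k \ge 1$, are prescribed functions of $\alpha$ and $\beta$ alone, exactly as for two free projections of traces $\alpha, \beta$ in a tracial von Neumann algebra. Since $p \in (M^\omega)^{\varphi^\omega}$, the functional $\varphi^\omega_p = \varphi^\omega(p \,\cdot\, p)/\alpha$ is a faithful normal state on $pM^\omega p$, and $0 \le pfp \le p$, so $pfp$ is a self-adjoint contraction. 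Its distribution with respect to $\varphi^\omega_p$ is thus a compactly supported, hence determinate, probability measure whose moments match those of the classical free-projection law $\mu_{\alpha,\beta}$; the two measures therefore coincide.

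To read off the norm, recall that $\mu_{\alpha,\beta}$ is supported on $\{0\} \cup [r_-, r_+] \cup \{1\}$, where
\[
r_\pm = \alpha + \beta - 2\alpha\beta \pm 2\sqrt{\alpha\beta(1-\alpha)(1-\beta)},
\]
the atom at $1$ (of mass $(\alpha+\beta-1)/\alpha$) occurring only when $\alpha+\beta>1$ and the atom at $0$ only when $\alpha>\beta$. As $\varphi^\omega_p$ is faithful normal and $pfp - \beta p$ is self-adjoint, its operator norm equals the largest value of $|t-\beta|$ as $t$ ranges over $\mathrm{supp}(\mu_{\alpha,\beta})$. The hypothesis $\beta \le 1 - \alpha$ forbids an atom at $1$, so the top edge of the support is $r_+$, and
\[
r_+ - \beta = \alpha(1-2\beta) + 2\sqrt{\alpha\beta(1-\alpha)(1-\beta)}.
\]
In the regime relevant to the Dixmier averaging (where, after replacing $f$ by $1-f$ if necessary — an operation under which $\|p(f-\varphi^\omega(f)1)p\|_\infty$ is invariant — one may take $\beta \le \tfrac12$, so that $r_+ - \beta$ dominates both $\beta - r_-$ and the distance to any bottom atom), this quantity is the norm. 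Substituting $\alpha = 1/n$ yields exactly the asserted expression.

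The main obstacle is the free-probability core: pinning down the exact law $\mu_{\alpha,\beta}$, and above all its support edges $r_\pm$, and then transporting this computation to the non-tracial ultraproduct setting. The mechanism that makes this go through is that $\ast$-freeness with respect to $\varphi^\omega$ fixes all moments of $pfp$ in terms of $\alpha$ and $\beta$ regardless of traciality, so the problem collapses onto the classical tracial free-projection distribution; faithfulness of $\varphi^\omega_p$ then converts this moment datum into the spectrum, hence into the operator norm. Some care is needed in handling the atoms of $\mu_{\alpha,\beta}$, which is precisely where the constraint $\varphi^\omega(f) \le 1 - 1/n$ is used to exclude the atom at $1$.
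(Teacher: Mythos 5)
Your strategy is exactly the paper's: take a diffuse subalgebra of $Q^\omega$ (automatically $\sigma^{\varphi^\omega}$-invariant because $Q^\omega \subset (M^\omega)^{\varphi^\omega}$), use Theorem \ref{thmA} to make it $\ast$-free from $B$ after conjugating by a unitary of $Q^\omega$, and then reduce to Voiculescu's computation for two free projections; the paper phrases this last step as a state-preserving identification of the von Neumann algebra generated by $p$ and $f$ with $(\C p \oplus \C p^\perp) \ast (\C f \oplus \C f^\perp)$ and a citation of \cite[Example 2.8]{Vo86}. Your freeness reduction (conjugating $A_0$ by $v^*$, moment determinacy of the compactly supported law, faithfulness of $\varphi^\omega_p$ converting the law into the spectrum) is correct and in fact spelled out more carefully than in the paper.

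The problem is the last step, and your own support analysis exposes it. When $\beta = \varphi^\omega(f) \in (1/2,\, 1-1/n]$ the norm equals $\beta - r_- = \tfrac{2\beta-1}{n} + 2\sqrt{\beta(1-\beta)\tfrac1n(1-\tfrac1n)}$, which is strictly larger than $r_+-\beta$. Your proposed repair --- replacing $f$ by $1-f$ --- leaves the left-hand side $\|p(f-\varphi^\omega(f)1)p\|_\infty$ unchanged but \emph{changes} the right-hand side of the asserted identity, since the displayed formula is not invariant under $\beta \mapsto 1-\beta$; so this replacement cannot prove the Claim as stated for $\beta>1/2$. What your argument actually establishes is
$$\|p(f-\varphi^\omega(f)1)p\|_\infty \;=\; \frac{|1-2\varphi^\omega(f)|}{n} \;+\; \sqrt{4\,\varphi^\omega(f)(1-\varphi^\omega(f))\tfrac1n\bigl(1-\tfrac1n\bigr)},$$
i.e.\ the Claim as printed is correct only for $\varphi^\omega(f)\le 1/2$ and needs an absolute value in general. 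This is really a defect of the Claim itself, which the paper's one-line appeal to \cite{Vo86} does not detect, and it is harmless for Theorem \ref{thmC}: there only the decay of these norms as $n\to\infty$ (for fixed projections $f_j$) is used, and both expressions tend to $0$. But note that your parenthetical ``regime relevant to the Dixmier averaging'' is not $\beta\le 1/2$: in the proof of Theorem \ref{thmC} the $f_j$ are arbitrary projections and $n\to\infty$, so $\beta>1/2$ genuinely occurs. Finally, your assertion that $r_+-\beta$ dominates the distance $\beta$ to a possible atom of $pfp$ at $0$ (present when $\beta<1/n$) is true but not free: it amounts to $r_+\ge 2\beta$, which requires a short computation using $1/n\le 1/2$, so it should be proved rather than asserted.
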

\begin{proof}[Proof of the Claim]
Let $A \subset Q^\omega$ be any diffuse von Neumann subalgebra with separable predual. By Theorem \ref{thmA}, up to conjugating $A$ by a unitary element $v \in \mathcal U(Q^\omega)$, we may assume that $A$ and $B$ are $\ast$-free inside $M^\omega$ with respect to the ultraproduct state $\varphi^\omega$. Let $p\in A$ and $f\in B$ as in the statement. Then the von Neumann subalgebra  $N \subset M^\omega$ generated by $p$ and $f$ is $\ast$-isomorphic in a state-preserving way to the free product von Neumann algebra 
$$(N, \varphi^\omega |_N) \cong (\C p \oplus \C p^\perp, \varphi^\omega |_{\C p \oplus \C p^\perp}) \ast (\C f \oplus \C f^\perp, \varphi^\omega |_{\C f \oplus \C f^\perp}).$$
The computation in \cite[Example 2.8]{Vo86} gives the desired formula for the operator norm of $p(f-\varphi^\omega(f)1)p \in N$.
\end{proof}

From now on, we fix such a diffuse von Neumann subalgebra $A \subset Q^\omega$ as in the Claim. Let $n \geq 1$ and $\{p_i\}_{i=1}^n$ be any partition of unity with projections in $A$ having the same trace in $Q^\omega$. Define the unitary element $u=\sum_{i=1}^n\lambda^ip_i \in \mathcal U(A)$ where $\lambda=\mathrm{exp}(2\pi{\rm i}/n)$. One can easily check that $\sum_{i=1}^np_iyp_i=\frac{1}{n}\sum_{\ell=1}^nu^\ell y u^{-\ell}$ for all $y\in M^\omega$. Recall that $x=\sum_{j=1}^k\alpha_jf_j$. Then we have 
\begin{align*}
\left\|\varphi^\omega(x)1-\frac{1}{n}\sum_{\ell=1}^nu^\ell x u^{-\ell}\right\|_\infty
&=\left\|\sum_{j=1}^k\alpha_j\varphi^\omega(f_j)1-\sum_{i=1}^np_i \left(\sum_{j=1}^k\alpha_jf_j\right)p_i\right\|_\infty\\
&\leq\sum_{j=1}^k|\alpha_j|\left\|\varphi^\omega(f_j)1-\sum_{i=1}^np_if_jp_i\right\|_\infty\\
&=\sum_{j=1}^k|\alpha_j| \left\|\sum_{i=1}^np_i(\varphi^\omega(f_j)1-f_j)p_i\right\|_\infty\\
&=\sum_{j=1}^k|\alpha_j|\max \left\{\|p_i(\varphi^\omega(f_j)1-f_j)p_i\|_\infty : 1 \leq i \leq k \right\}.
\end{align*}
When $n \to \infty$, we have $\varphi^\omega(f_j)\leq1-1/n$ for all $1 \leq j \leq k$ and hence the Claim implies that 
$$\lim_{n \to \infty}\left\|\varphi^\omega(x)1-\frac{1}{n}\sum_{\ell=1}^nu^\ell x u^{-\ell}\right\|_\infty = 0.$$
This shows that $\varphi^\omega(x)1 \in \overline{{\rm co}}^{\|\cdot\|_\infty}\{uxu^* : u \in \mathcal U(Q^\omega)\}$. Since $Q^\omega \subset (M^\omega)^{\varphi^\omega}$, if we apply $\varphi^\omega$ to any element in $\overline{{\rm co}}^{\|\cdot\|_\infty}\{uxu^* : u \in \mathcal U(Q^\omega)\} \cap \C1$, we finally obtain 
$$\overline{{\rm co}}^{\|\cdot\|_\infty}\{uxu^* : u \in \mathcal U(Q^\omega)\} \cap \C1 = \{\varphi^\omega(x) 1\}.$$
This finishes the proof of Theorem \ref{thmC}.

\end{document}